\documentclass[final,leqno]{siamltex}
\usepackage{threeparttable}
\usepackage{dcolumn}
\newcolumntype{d}{D{.}{.}{-1}}
\usepackage{subfigure}
\usepackage[german,english]{babel}
\usepackage{amssymb}
\usepackage{algorithm}
\usepackage{algorithmic}
\usepackage{color}
\usepackage{graphicx}
\usepackage{booktabs}
\usepackage{amsfonts}
\usepackage{soul}
\usepackage{mathrsfs}
\usepackage{mathtools}
\usepackage{amsmath, amssymb}
\usepackage{float}
\usepackage{threeparttable}
\usepackage{dcolumn}
\newcolumntype{d}{D{.}{.}{-1}}

\usepackage{enumitem}

\title{Nonlinear Model Reduction via an Adaptive Weighting of Snapshots\thanks{
This research was supported by the Office of Naval Research.}}


\author{Liqian Peng\thanks{Department of Mechanical and
Aerospace Engineering, and Institute for Networked Autonomous
Systems, University of Florida, Gainesville, FL 32611-6250
(liqianpeng@ufl.edu).}
        \and Kamran Mohseni\thanks{Department of Mechanical and
Aerospace Engineering, Department of Electrical and Computer
Engineering, and Institute for Networked Autonomous Systems,
University of Florida, Gainesville, FL 32611-6250
(mohseni@ufl.edu).}}

\begin{document}

\maketitle
\begin{abstract}
In this paper, we propose a new approach to  model reduction of parameterized partial differential equations (PDEs) based on the concept of adaptive reduced bases. The presented approach is particularly suited for large-scale nonlinear systems characterized by parameter variations. Instead of using a global basis to construct a global reduced model, the proposed method approximates the original system by multiple lower-dimensional subspaces. Each localized reduced basis is generated by the SVD of a weighted snapshot ensemble; here, each weighting coefficient is a function of the input parameter. Compared with a global model reduction method, such as the classical POD, the adaptive model reduction method could yield a more accurate  solution with a fixed subspace dimension. Moreover, we combine the adaptive reduced model  with the chord iteration to solve elliptic PDEs in a computationally efficient fashion. The potential of the method for achieving large speedups, while maintaining good accuracy, is demonstrated for both elliptic and parabolic PDEs in a few numerical examples.
\end{abstract}

\begin{keywords}
model reduction; adaptive weighting; chord iteration
\end{keywords}

\pagestyle{myheadings} \thispagestyle{plain} \markboth{LIQIAN PENG
AND KAMRAN MOHSENI}{SYMPLECTIC  MODEL REDUCTION OF HAMILTONIAN SYSTEMS}

\section{Introduction}
\vspace{-2pt}

Due to computing speed barriers, in many engineering applications, direct numerical simulations are so computationally intensive that they either cannot be performed as often as needed. For this reason, during the past several decades, many efforts have been put forward to develop reduced models for time-critical operations such as electrical power grids \cite{Marsden:99e, PetzoldLR:02a},
structural dynamics \cite{AmabiliM:03a} , chemical reaction
systems \cite{KevrekidisI:96a, KevrekidisI:98a}, and CFD-based
modelling and control \cite{Holmes:98a, RowleyCW:04a,
AtwellJA:01a, BergmannM:05a}, to list but a few. The main idea for this kind of model reduction is based on the following: although the state of a complex system is in general represented by a large dimensional space, the linear subspace spanned by solution snapshots actually has a much lower dimension.  To this effect, the proper orthogonal decomposition (POD) with Galerkin projection \cite{Holmes:98a, MooreBC:81a} has been developed to generate lower dimensional surrogates for the original large-scale complex problems.  While POD always looks for a linear subspace instead of its curved submanifold, it is computationally tractable and able to capture the dominant patterns in a nonlinear system.  A typical application of the POD-Galerkin approach involves an offline-online splitting methodology. In the offline stage, the original problem, corresponding to some sampled input parameters, is solved to obtain some solution snapshots. For the online computation, a linear subspace for new input-output evaluation is constructed. This methodology  is very suitable for the real-time or many-query applications to achieve minimal marginal cost per input-output evaluation.

To the best of our knowledge, there are at least three approaches that have been considered in the context of the POD method: the global reduced model (GRM), the local reduced model (LRM), and the adaptive reduced model (ARM). The GRM approximates the solution of interest in a subspace spanned by global basis vectors \cite{Schmidt:04a}. This method  can be straightforwardly applied to  a wide range of problems; however, in order to obtain a high accuracy, a subspace with relatively high dimension should be used to construct the reduced model, especially when  many solution modes exist for the whole interested domain. Thus, the GRM inevitably keeps some redundant dimensions in the online computation  and can lead to long online simulation times. In order to improve the computational efficiency, the LRM projects the original equation onto a subspace which corresponds to snapshots in one subdomain. All the precomputed snapshots are clustered either through time domain partitions \cite{DihlmannM:11a, Mohseni:14a}, space domain partitions \cite{FarhatC:12a, Willcox:13a}, or parameter domain partitions \cite{PateraAT:10a,  HaasdonkB:11a, Eftang:12a, MadayY:12a}.
In the LRM, the selected snapshots contribute equally to form the local POD subspace, while snapshots outside one subdomain are neglected. To overcome this, some ARMs use global data, but form  adaptive reduced bases  through  subspace interpolation methods, such as the angle interpolation \cite{LieuL:04a}, and the geometric interpolation in the Grassmann  manifold\cite{FarhatC:08a, FarhatC:09a}. It should be mentioned that the interpolation-based model reduction has been successfully applied in many areas of computational engineering including frequency response analysis \cite{Farhat:07b, Farhat:12b, Farhat:13b}, structural vibrations \cite{FarhatC:09a, HanS:02a}, and aeroelasticity \cite{FarhatC:08a, FarhatC:06a,FarhatC:10a, FarhatC:11c}. These  methods could effectively construct a new subspace from precomputed subspaces for each new parameter; however, the constructed subspace must have the same dimension with the precomputed ones. Thus, there is no flexibility to change the new subspace dimension in order to balance the accuracy and the computational speed of reduced models.

In this paper, we present a new adaptive reduced modeling technique via an adaptive weighting of the snapshots in the context of localized reduced bases; the dimension of our reduced model can be adaptively chosen to obtain certain desired levels of accuracy.  In our approach, the basis vectors are directly computed through the singular value decomposition (SVD) of a weighted snapshot matrix, where each column is defined as a  snapshot vector multiplied by a weighting coefficient. The method of determining weighting coefficients in the paper is similar to the method of determining interpolation coefficients  using a  kernel function. When a compact interpolation scheme is used, the subspace it generates is similar to the one from a local POD method. At the other extreme, when all the coefficients in the information matrix are set equally, this method degenerates to a global POD method.  Finally, when a non-compact scheme is used, such as radial basis function or inverse distance weighting, the subspace behaves similar to local POD when the dimension is low and similar to global POD when the dimension is high. We demonstrate that the reduced equation corresponding to this subspace gives much higher accuracy than the global reduced model with the same subspace dimension via numerical simulations. Furthermore, the proposed ARM could be formed by the discrete empirical interpolation method (DEIM) \cite{SorensenDC:10a}, and therefore  handle non-linearities that arise.

Another contribution of this paper is that the chord iteration  is introduced to model reduction to speed up the online computation of elliptic PDEs.  For elliptic PDEs, most existing model reduction techniques are focused on simplify the Newton iteration \cite{FarhatC:11a, FarhatC:11b, PateraAT:06a}. If the original system contains a nonlinear term, the classical DEIM framework \cite{SorensenDC:10a}  allows for a relatively inexpensive computation of a reduced Jacobian operator. However, there exists some computational redundancy to update the reduced Jacobian at each iteration. On one hand, computing a Jacobian matrix is usually more expensive than computing a vector field; this statement holds for both elliptic PDEs and their reduced versions, since evaluation of a  Jacobian matrix is based on a series evaluations of reduced vector fields in general. On the other hand, since  a reduced Jacobian based on the DEIM method is only an approximation of the original Jacobian, the reduced Newton iteration can only achieve a linear convergence rate, rather than a quadratic convergence rate in the standard Newton iteration. Motivated by this fact, we can save additional online computation time by approximating a reduced Jacobian during the offline stage. This is achieved by utilizing the chord iteration in the framework of the ARM.

The remainder of this article is organized as follows. In section \ref{sec:overview}, an overview of model reduction for parameterized PDEs and a general error analysis are presented. Section \ref{sec:POD} briefly reviews the classical POD-Galerkin method and its variant, the POD-DEIM. Then a general method for elliptic PDEs is discussed in section \ref{sec:elliptic}. After introducing the chord method, we use it in conjunction with the classical POD method for model reduction. Then adaptive reduced basis is proposed in order to use fewer modes to approximate the original system.  Section \ref{sec:parabolic} extends the ARM to parabolic PDEs. Finally, conclusions are offered in section \ref{sec:conclusion}.

\section{Formulation of Parameterized PDEs}\label{sec:overview}
We consider both parabolic PDEs and elliptic PDEs in this section. By discretization (for example, using finite difference or finite element methods), an elliptic parameterized PDE for variable $u\in \mathbb{R}^n$ with input parameter $\mu\in \mathbb{R}^d$ can be expressed as an algebraic equation
\begin{equation} \label{elpde}
f(\mu,u) = 0,
\end{equation}
where $f: \mathcal{D}\times \mathcal{R}\to \mathbb{R}^n$ is a smooth function for $\mathcal{D} \subset \mathbb{R}^d$ and $\mathcal{R} \subset \mathbb{R}^n$. For any fixed input parameter $\mu \in \mathcal{D}$, we seek a solution $u=u(\mu)\in \mathcal{R}$, such that $f(\mu,u)=0$ could be satisfied.

Parabolic PDEs are often used to describe dynamical systems with time dependent solutions. Let $\mathcal{I}=[0, T]$ denote the time domain and $\mathcal{D}$ denote the parameter domain. By spatial
discretization, the original parabolic PDE becomes an ordinary differential equation (ODE)
\begin{equation} \label{ode}
\dot u = f(t, \mu, u),
\end{equation}
with an initial condition $u(0,\mu)=u_0$, where $f: \mathcal{I} \times \mathcal{D} \times \mathcal{R} \to \mathbb{R}^n$ denotes the discretized vector field. For any fixed $t\in \mathcal{I}$ and $\mu\in \mathcal{D}$, the state variable $u=u(t,\mu)\in \mathcal{R}\subset \mathbb{R}^n$ satisfies (\ref{ode}). By definition, $u(t, \mu)$ is a flow which gives an orbit in $\mathbb{R}^n$ as $t$ varies over $\mathcal{I}$ for a fixed initial condition $u_0$ and a fixed input parameter $\mu\in \mathcal{D}$. The orbit contains a sequence of states (or state vectors) that follow from $u_0$.

In this article we shall respectively refer (\ref{elpde}) and (\ref{ode}) as discretized elliptic and parabolic PDEs, although they can represent more general discretized PDEs with parameter variation. In order to use the same framework to study (\ref{elpde}) and (\ref{ode}), we use $\tau$ to represent $\mu$ in (\ref{elpde}) and to represent $(t,\mu)$ in (\ref{ode}). Let $\mathcal{T}$ denote the input space, and $\mathcal{T}=\mathcal{D}$ or $\mathcal{T}=\mathcal{I} \times \mathcal{D}$. Then for both scenarios, $u(\tau)$ and $f(\tau, u)$ could be used to represent the solution snapshot and the vector field corresponding to $\tau \in \mathcal{T}$. It follows that (\ref{elpde}) and (\ref{ode}) become $f(\tau,u)=0$ and $\dot u=f(\tau, u)$ respectively.

An offline-online splitting scheme is used for the model reduction in this article. Let $N$ denote the ensemble size. In the offline stage, $\{\tau_i\}_{i=1}^N$ are sampled in the parameter space,and the corresponding solutions with their derivatives could induce a subspace, where the real solution approximately resides. Induced subspaces in the context of model reduction include the Lagrange, Taylor, and Hermite subspaces \cite{PorschingTA:85a}. In this article, we are focusing on constructing a reduced model in the Lagrange subspace $\mathcal{S}_r=\textnormal{span}\{u_i\}_{i=1}^N \subset \mathbb{R}^n$. The subspace dimension $r$ satisfies $r\le\min \{ n, N\}$. Let $\Phi_r:=[\varphi_1, \ldots, \varphi_r] $ contains an orthonormal basis $\{\varphi_i\}_{i=1}^r$ of $\mathcal{S}_r$. Let superscript $T$ denote the matrix transpose, and $I_r$ is the $r \times r$ identity matrix. Then $\Phi_r\in \mathcal{V}_{n,r}$, where $\mathcal{V}_{n,r}=\{A\in \mathbb{R}^{n\times r}|A^TA=I_r\}$ is denoted as the Stiefel manifold of orthonormal $r$-frames in $\mathbb{R}^n$. When $r\approx n$, a reduced equation constructed in $\mathcal{S}_r$ could not obtain significant speedups. One often seeks a $k(\ll r)$-dimensional linear subspace $\mathcal{S}_k\subset\mathcal{S}_r$ where most solution vectors approximately reside. Moreover, there exists an $n \times k$ orthonormal matrix $\Phi_k =[\phi_1, \ldots, \phi_k]$ whose column space is $\mathcal{S}_k$. Once the subspace is specified, a reduced model can be constructed by several approaches, such as Galerkin projection~\cite{KunischK:02a}, Petrov-Galerkin projection~\cite{FarhatC:11a}, symplectic Galerkin projection~\cite{mohseni:14b}, and empirical interpolation~\cite{PateraAT:04a,PateraAT:06a}.

Regardless of the techniques applied for model reduction, a key consideration is how to approximate the original system with high accuracy. The projection of a state variable $u \in \mathbb{R}^n$ onto $\mathcal{S}_r$ and $\mathcal{S}_k$ can be respectively presented by $\tilde u_r:=\Phi_r \Phi_r^T u$, and $\tilde u_k:=\Phi_k \Phi_k^T u$. Let $e_r:=u-\tilde u_r$ denote the difference between a solution vector $u$ and its projection on $\mathcal{S}_r$, and $e_k:=u-\tilde u_k$ denote the difference between $u$ and its projection on $\mathcal{S}_k$. In addition, we define $e_o:=\tilde u_r-\tilde u_k$ as the difference between these two projections of $u$.

Suppose the reduced model has a unique solution, $\hat u$, and $\hat u=\hat u(\tau)\in\mathcal{S}_k$. Usually, $\tilde u_k \ne \hat u$, and we use $e_i:=\tilde u_k-\hat u$ to represent their difference. Moreover, numerical simulation inevitably introduces further the numerical error $e_t$, such as discretization of time integration and the round-off error. This kind of error exists for both high dimensional and low dimensional simulations. However, for simplicity we assume the solution to the reduced model $\hat u$ is obtainable by an accurate numerical scheme, and neglect $e_t$. The total error $e$ of the approximating solution $\hat u$ from a reduced equation, can be decomposed into three components: $e = {e_r}+{e_o} + {e_i}$, and these components are orthogonal to each other.

Decreasing the magnitude of the projection error $e_k(=e_r+e_o)$ is the key to decrease the total error. On one hand, $e_k$ provides a lower bound for the reduced model, as $\left\| {{e_k}} \right\| \le \left\| e \right\|$ is always satisfied. On the other hand, for both elliptic \cite{PorschingTA:85a} and parabolic PDEs \cite{PetzoldLR:03a}, if the Galerkin method is used to produce the reduced equation, there respectively exist a constant $C$ such that $\left\| e \right\| \le C\left\| {{e_k}} \right\|$. Therefore, a upper error bound of $e$ is also related to $e_k$. The first component $e_r$ of $e_k$ is directly related to sampling input parameters during the offline stage. One could use uniform sampling process in the parameter space, or use nonuniform sampling process through a greedy algorithm \cite{PateraAT:04a,PateraAT:06a}. The second component $e_o$ of $e_k$ comes from the error of dimensionality reduction. If the global POD method is used, then $e_o$ is related to the truncation of the SVD. In this article, we are about to discuss an adaptive method to form a reduced subspace $\mathcal{S}_k$ such that $\left\| {{e_o}} \right\|$ could reach a lower value with fixed $k$. We will begin with a brief review of POD, which paves a way to introduce the ARM.

\section{Proper Orthogonal Decomposition (POD)}\label{sec:POD}
 In this section, we give a brief overview of the procedure and properties of POD. In a finite dimensional space, it is essentially the same as the singular value decomposition (SVD). Let $X=[u_1, \ldots, u_N]$ be a $n\times N$ snapshot matrix, where each column $u_i=u(\tau_i)$ represents a solution snapshot corresponding to input parameters $\tau_i$. The POD method constructs a basis matrix $\Phi_k$ that solves the following minimization problem
\begin{equation}\label{opt}
\mathop {\min }\limits_{{\Phi _k} \in {\mathcal{V}_{n,k}}}
{\left\| {(I - {\Phi _k}{\Phi _k}^T)X} \right\|_F}.
\end{equation}
Thus, the basis matrix $\Phi_k$ minimizes the Frobenius norm of the difference between $X$ with its projection $\tilde X:=\Phi_k\Phi_k^TX$ onto $\mathcal{S}_k$. Since the dimension of the Lagrange subspace $\mathcal{S}$ is $r$, it follows that ${\rm{rank}}({\rm{X}}) = r$. Thus, the SVD of $X$ gives
\begin{equation}
X = V \Lambda W^T,
\end{equation}
where $V\in \mathcal{V}_{n,r}$, $W\in \mathcal{V}_{p,r}$, and $\Lambda={\rm{diag}}(\lambda_1, \ldots, \lambda_r)\in \mathbb{R}^{r\times r} $ with $\lambda _1 \ge \lambda _2 \ge \ldots \ge \lambda _r > 0$. The $\lambda$s are called the singular values of $X$. In many applications, the truncated SVD is more economical, where only the first $k$ columns $V'$ of $V$ and the first $k$ columns $W'$ of $W$ corresponding to the diagonal matrix $\Lambda'$ corresponding to the $k$ largest singular values are calculated, and the rest of the matrices are not computed. Then the projection of $X$ is given by
\begin{equation}
\tilde X=V'\Lambda' W'^T,
\end{equation}
and the solution of $\Phi_k$ in (\ref{opt}) is given by $\Phi_k=V'$. Moreover, the projection error of (\ref{opt}) in Frobenius norm by the POD method is given by
\begin{equation}\label{podtruc}
E_k={\left\| {(I - {\Phi _k}{\Phi _k}^T)X} \right\|_F} = \sqrt
{\sum\limits_{i = k + 1}^r {\lambda _i^2} }.
\end{equation}

The key notion of POD and other projection-based reduced models is to find a $k$-dimensional subspace $\mathcal{S}_k$ on which all the state vectors live. Although the truncated SVD is no longer an exact decomposition of the original matrix $X$, it provides the best approximation $\tilde X$ to the original data $X$ with least Frobenius norm under the constraint that $\dim(\tilde X)=k$. In the rest of this paper, the SVD refers to the truncated SVD unless specified otherwise.

\subsection{Galerkin Projection}
Let $v_k \in \mathbb{R}^k$ denote the state variable in the subspace coordinate system, and $\hat u_k=\Phi_k v_k$ denote the same state in the original coordinate system. Projecting the system (\ref{elpde}) onto $\mathcal{S}_k$, one obtains the reduced model of an elliptic PDE,
\begin{equation}\label{romelp}
\Phi_k^T f(\tau, \Phi_k v_k)=0,
\end{equation}
where $\tau=\mu\in \mathcal{D}$ denotes the input parameter. Analogously, a reduced model of a parabolic PDE can be obtained by projecting the system (\ref{ode}) on to $\mathcal{S}_k$,
\begin{equation}\label{rompara}
\dot v_k= \Phi_k^T f(\tau, \Phi_k v_k),
\end{equation}
and $\tau=(t, \mu)\in \mathcal{I}\times \mathcal{D}$ is used to identify the solution trajectory corresponding to $\mu$ at time $t$.

\subsection{Discrete Empirical Interpolation Method (DEIM)}\label{sec:DEIM}

Equations (\ref{romelp}) and (\ref{rompara}) are reduced equations formed by the Galerkin projection. In fact, they can achieve fast computation only when the analytical formula of the reduced vector field $\Phi^T f(\tau,\Phi_k v_k)$ can be significantly simplified, especially when it is a linear (or a polynomial) function of $v_k$. Otherwise, one will need to compute the state variable in the original system $\Phi_k v_k$, evaluate the nonlinear vector field $f$ at each element, and then project $f$ onto $\mathcal{S}_k$. In this case, the reduced models (\ref{romelp}) and (\ref{rompara}) are more expensive than the correspondingly full models.
In recent years,  many variants of POD-Galerkin   were
developed to reduce the complexity of evaluating the nonlinear
term of vector field, such as   trajectory piecewise
linear and quadratic approximations~\cite{WhiteJ:03a, WhiteJ:06a,
WeileDS:01, WhiteJ:00a}, missing
point estimation~\cite{AstridP:04a,WillcoxK:08a}, Gappy POD method~\cite{EversonR:95a, WillcoxK:04a, WillcoxK:06a, FarhatC:11a, Farhat:13a}, empirical
interpolation   method~\cite{PateraAT:04a,PateraAT:06a},  and DEIM~\cite{SorensenDC:10a, DrohmannM:12a}.
 Since our numerical simulation applies the DEIM, we briefly review this method in this section.

The original vector field, $f(\tau, u)$ can be split into a linear part and a nonlinear part, i.e.,
\begin{equation}\label{decompln}
 f(\tau, u)=L u+ f_N(\tau,u).
 \end{equation}
where $L\in \mathbb{R}^{n\times n}$ is a linear operator and $f_N(\tau, u)$ denotes the nonlinear vector term. Using the Galerkin projection, the reduced vector field is given by
\begin{equation} \label{decomp}
\Phi_k^T f(\tau, \Phi_k v_k) = \hat L v_k + \Phi_k^T f_N (\tau,
\Phi_k v_k),
\end{equation}
where $\hat L=\Phi_k^T L \Phi_k\in \mathbb{R}^{k\times k}$. Unless the nonlinear term can be analytically simplified, its computational complexity still depends on $n$. An effective way to overcome this difficulty is to compute the nonlinear term at a small number of points and estimate its value at all the other points. Considering $u$ is a smooth function of $\tau$, we can define a \emph{nonlinear snapshot} $ g(\tau) =:f_N(\tau, u(\tau))$ for $\tau \in \mathcal{T}$. Then the reduced vector field from (\ref{decompln}) restricted on $ \tau \in \mathcal{T}$ and $u=u(\tau)$ can be approximated as
\begin{equation} \label{deimapprox}
\Phi_k^T f(\tau, u(\tau)) = \hat L v_k + [\Phi_k^T \Psi_m(P^T
\Psi_m)^{-1}] [P^T g (\tau)],
\end{equation}
where $\Psi_m$ is an $n\times m$ matrix that denotes the collateral POD basis based on a precomputed nonlinear snapshot ensemble, and $P^T$ is an $m \times n$ index matrix to project a vector of dimension $n$ onto its $m$ elements. For example, if $g=[g_1; \ldots; g_4]$, and $P^T=[1 \ 0 \ 0 \ 0; \ 0 \ 0 \ 1\ 0]$, then $P^Tg=[g_1; g_3]$. $P$ can be computed by an offline greedy algorithm. We recommend readers to refer \cite{SorensenDC:10a} for more details. Notice that $\Phi_k^T \Psi_m(P^T \Psi_m)^{-1}$ is calculated only once at the outset and $P^T g (\tau)$ is only evaluated on $m$ elements of $g(\tau)$, therefore it is very efficient when $m\ll n$. Using the POD-DEIM approach, we will respectively study nonlinear elliptic and parabolic PDEs in the next two sections.

\section{Nonlinear Elliptic PDEs}\label{sec:elliptic}
We are focusing on model reduction for  nonlinear elliptic PDEs in this section. The algorithm in this section could be considered as a reduced-order extension for the chord method. We follow the offline-online splitting computational strategy, and use the DEIM to treat nonlinear terms in the original system.

The general form of elliptic PDEs, after discretization, is given by a nonlinear algebraic equations (\ref{elpde}). Let $N$ denote the ensemble size. In the offline stage, $\{\mu_i\}_{i=1}^N$ are sampled in the parameter space, and we solve the corresponding solutions $\{u_i\}_{i=1}^N$. If $u_i=u(\mu_i)$ satisfies (\ref{elpde}), and the Jacobian matrix $J_i:=D_u f(\mu_i, u_i)$ is nonsingular, then by the implicit function theorem, there exists neighborhoods $V_i\subset \mathbb{R}^m$ of $\mu_i$, and $U_i\subset \mathbb{R}^n$ of $u_i$ and a smooth map $u:V_i\to U_i$ such that locally, (\ref{elpde}) has a unique solution. If a new input parameter ${\mu _*} \in { \cup _i}{V_i}$, then the following equation has a unique solution,
\begin{equation}\label{hd}
 F(u)=0,
\end{equation}
where $F(u):=f(\mu_*,u)$.

\subsection{Adaptive Reduced Model (ARM)}\label{sec:ARM}
In order to solve (\ref{hd}) via a reduced model, we consider three SVD-based approaches to construct a subspace $\mathcal{S}_k$ from precomputed snapshots. The first approach is the standard POD method, which constructs a \emph{global reduced model} (GRM) from all the snapshots in the ensemble. Defining a matrix of $N$ snapshots
\begin{equation}\label{snapX}
X = [u_1, \ldots, u_N],
\end{equation}
then the POD basis matrix $\Phi_k$ is constructed from SVD of $X$. The projection error of $X$ in Frobenius norm is given by (\ref{podtruc}). If the problem depends on many parameters or if the solution shows a high variability with the parameters, a relatively high dimensional reduced space is needed in order to represent all possible solution variations well. This effect is even considerably increased when treating evolution problems with significant solution variations in time. Another aspect is the fact that projection-based model reduction techniques, such as POD, usually generate small but full matrices while common discretization techniques (such as the finite difference method) could lead to large but sparse matrices. Unless the reduced model has significantly lower dimension, it is even possible that the reduced model is more time consuming to evaluate than the original model.

The second approach is to construct a \emph{local reduced model} (LRM), which partitions the interested parameter domain $\mathcal{D}$ into some disjoint subdomains $\mathcal{D}_i$ and forms a local snapshot matrix for each subdomain.    Let ${\left\| {{\mu _i} - {\mu _j}}\right\|}$ denotes the distance between $\mu _i$ and $\mu _j$ in the parameter domain. Without additional information about parameter domain,  the Euclidean norm can be used here for simplicity. Suppose $\mu_i$ is the nearest neighbor of $\mu_*$ among $\{\mu_i\}_{i=1}^N$, then  $\mu_*\in \mathcal{D}_i$.  Although we do not need to explicitly compute the domain partition, this partition achieves the same effect as the Voronoi diagram, where each $\mu_i$ is a reference point of $\mathcal{D}_i$. In order to describe the local neighborhood relationships between precomputed data points, one can construct the \emph{$\varepsilon$-neighborhood graph} or the \emph{$k$-nearest neighbor graph} for the vertices $\{\mu_i\}_{i=1}^N$. In the first case,  we connect all vertices whose pairwise distances are smaller than $\varepsilon$.
In the second case,  $\mu_i$ and $\mu_j$ are connected with an edge if $\mu_i$ is among the $k$-nearest neighbors of $\mu_j$ or if $\mu_j$ is among the $k$-nearest neighbors of $\mu_i$. Let $l$ input parameters  $\{\mu_{j_1}, \ldots, \mu_{j_l}\}$ be neighbors of $\mu_i$, then a local snapshot matrix is defined by
\begin{equation}\label{snaploc}
X_i^L = [u_{j_1}, \ldots, u_{j_l}].
\end{equation}
  Let $u_*$ be the solution corresponds to the input parameter $\mu_*$, i.e., $f(\mu_*,u_*)=0$. If $u_*$ approximately resides on a subspace spanned by the neighbours of $u_i$, the subspace can be constructed by the SVD of $X_i^L$.
Since $X_i^L$ contains fewer snapshots than $X$, it is expected that the LRM needs lower dimension to approximate the original system. Equivalently, if the LRM has the same dimension as the GRM, it has less truncation error of the SVD. It should be mentioned that if $N$ is large, one can pick a few $\mu_i$s as reference points to construct a smaller number of subdomains.

The LRM is usually referred as local PCA in many fields of computer science including web-searching, information retrieval, data mining, pattern recognition and computer vision. Moreover, the idea of local neighbourhood graphs mentioned above is  also widely used in   other main techniques for the nonlinear dimensionality reduction, such as locally linear embedding \cite{RoweisST:00a},  Laplacian eigenmaps \cite{BelkinM:02a}, and Isomap \cite{TenenbaumJB:00a}. All these techniques can  successfully discover the locally linear structure  when  there are a large number of vertices in each neighbourhood. However, for the model reduction of PDEs, it is usually very expensive to obtain many solution snapshots, as they require solving the original problems during the offline stage. Without a large number of data points for each neighbourhood, the LRM as well as other nonlinear dimensionality reduction techniques may not be able to yield  accurate solutions without  taking advantage the information from some other partitioned subdomains.

We extend the idea of the LRM and the  fully connected graph to propose the third approach for model reduction, the \emph{adaptive reduced model} (ARM), to compute the adaptive reduced bases for parameter variation.  Here  all pairwise points are connected with a weighting matrix. As the graph should emphasize the local neighborhood relationships, the element  $a_{ij}$ of the weighting matrix has a large value when $\mu_i$ and $\mu_j$ are close.
An example for such a weighting function is the Gaussian  function
\begin{equation}\label{weight}
a_{ij} = {{\exp ( { - {{{{\left\| {{\mu _i} - {\mu _j}}\right\|}^2}}}/{{2{\sigma ^2}}}} )}},
\end{equation} where $\sigma$ controls  the kernel length.
Thus, $a_{ij}$ and satisfies $0< a_{ij}\le 1$.

Suppose $\mu_*\in \mathcal{D}_i$. The direct projection  of $u_*$ onto a subspace spanned by $\Phi_r$  can be written as a linear combination of $a_{ij}u_j$,
\begin{equation}\label{ustar}
 \tilde u_r^A(\mu_*) = \sum\limits_{j = 1}^N
{{\eta _j} {a_{ij}}{u_j}} ,
\end{equation}
where $\eta_j$ is the coefficient, and  superscript $A$ is used to denote the proposed ARM. A weighted snapshot matrix for the $i$th subdomain can be defined as
\begin{equation}\label{info}
X^A_i = [{a_{i1}}{u_1},\ldots,{a_{iN}}{u_N}],
\end{equation}
where $0\le a_{ij}\le 1$ for each $a_{ij}$.

 When rank$(X^A_i)>k$, the SVD can be used to extract the first $k$ dominant modes from $X^A_i$, and obtain a lower dimensional subspace. Especially, the ARM degenerates to the GRM if $\sigma \to \infty$ and $a_{ij}=1$ for each $i,j$.
  The Gaussian weighting function can also be replaced by a compact weighting function. Then, the ARM degenerates to the LRM if $a_{ij}=1$ for $\|\mu_i-\mu_j\|<\epsilon$ and $a_{ij}=0$ otherwise. For convenience, we remove the subscript $i$ hereafter. Let the column vectors of $\Phi_k^A \in \mathcal{V}_{n \times k}$ span the POD subspace of $X^A$. As an analogy of $E_k$ in (\ref{podtruc}), the projection error of $X^A$ onto $\mathcal{S}_k^A$ in Frobenius norm is given by
\begin{equation}\label{trcerror}
E_k^A={\left\| {(I_k - { \Phi _k^A}{( \Phi _k^A)}^T)X} \right\|_F}
= \sqrt {\sum\limits_{j = k + 1}^r {(\lambda _j^A)^2} },
\end{equation}
where $\lambda _j^A$ is the $j$th singular value of $X^A$.

When rank$(X^A) \le k$, the SVD or the Gram-Schmidt process could be used to obtain $r$ orthonormal basis vectors that span $\mathcal{S}_r^A$. Choose any additional $k-r$ vectors to form an $n\times k$ matrix $\Phi_k^A \in \mathcal{V}_{n \times k}$, (\ref{trcerror}) becomes $E^A_k=0$. Comparing $E_k$ with $E_k^A$, we have the following lemma.

\medskip
\begin{lemma} \label{lem:trunc}
Let $X$ and $X^A$ be the matrices for solution snapshots and weighted solution snapshots. $E_k$ and $E_k^A$ are projection errors respectively given by (\ref{podtruc}) and (\ref{trcerror}). Then, $E_k \ge E_k^A$.
\end{lemma}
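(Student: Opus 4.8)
The plan is to exploit the fact that both projection errors are, by the Eckart--Young optimality already recorded in~(\ref{opt})--(\ref{podtruc}), the \emph{minimal} rank-$k$ Frobenius projection errors of their respective matrices, and then to relate the weighted matrix $X^A$ to $X$ through a diagonal scaling. Dropping the subscript $i$ as in the text, observe that
\begin{equation*}
X^A = X D, \qquad D = \mathrm{diag}(a_1,\ldots,a_N), \quad 0\le a_j \le 1 ,
\end{equation*}
so that $X^A$ arises from $X$ by right-multiplication with a diagonal matrix whose spectral norm satisfies $\|D\|_2 = \max_j a_j \le 1$.

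First I would establish a column-wise bound that holds for \emph{every} admissible frame. For an arbitrary $\Phi_k \in \mathcal{V}_{n,k}$, write $P := I - \Phi_k \Phi_k^T$ for the orthogonal projector onto the complement of $\mathcal{S}_k$. Since the $j$th column of $X^A$ is $a_j u_j$ and $P$ acts column by column,
\begin{equation*}
\| P X^A \|_F^2 = \sum_{j=1}^N a_j^2 \, \| P u_j \|_2^2 \le \sum_{j=1}^N \| P u_j \|_2^2 = \| P X \|_F^2 ,
\end{equation*}
where the inequality uses $a_j^2 \le 1$.

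The second step combines this with optimality. By~(\ref{opt})--(\ref{podtruc}) the POD basis $\Phi_k^A$ of $X^A$ minimizes $\|(I-\Psi\Psi^T)X^A\|_F$ over $\Psi \in \mathcal{V}_{n,k}$, so substituting the particular frame $\Phi_k$ (the POD basis of $X$) can only increase the error:
\begin{equation*}
(E_k^A)^2 = \| (I - \Phi_k^A (\Phi_k^A)^T) X^A \|_F^2 \le \| (I - \Phi_k \Phi_k^T) X^A \|_F^2 \le \| (I - \Phi_k \Phi_k^T) X \|_F^2 = E_k^2 .
\end{equation*}
Taking square roots yields $E_k \ge E_k^A$. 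The degenerate case $\mathrm{rank}(X^A)\le k$ needs no separate treatment, since there $E_k^A=0$, as already noted just before the lemma.

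I do not expect a genuine obstacle here; the one point to get right is the bookkeeping in~(\ref{trcerror}), where the matrix inside the norm defining $E_k^A$ must be $X^A$ (not $X$) for the stated identity with $\sqrt{\sum_{j>k}(\lambda_j^A)^2}$ to hold, that quantity being exactly the residual of projecting $X^A$ onto its own dominant subspace. An alternative and even shorter route avoids projectors entirely: the standard singular-value inequality $\sigma_j(XD)\le \sigma_j(X)\,\|D\|_2$ together with $\|D\|_2\le 1$ gives $\lambda_j^A \le \lambda_j$ for every $j$, whence $\sum_{j=k+1}^r (\lambda_j^A)^2 \le \sum_{j=k+1}^r \lambda_j^2$ term by term. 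I would nonetheless present the projection argument as the main proof, since it reuses only the optimality property~(\ref{opt})--(\ref{podtruc}) already available in the paper rather than an external singular-value perturbation bound.
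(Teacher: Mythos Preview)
Your argument is correct and is essentially the paper's own proof: both establish the column-wise inequality $\|(I-\Phi_k\Phi_k^T)X^A\|_F\le \|(I-\Phi_k\Phi_k^T)X\|_F$ from $0\le a_j\le 1$ and then invoke the optimality of $\Phi_k^A$ for $X^A$. Your remark that the matrix inside the norm in~(\ref{trcerror}) should read $X^A$ rather than $X$ is well taken, and the alternative via $\sigma_j(XD)\le\sigma_j(X)\|D\|_2$ is a valid shortcut not used in the paper.
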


\begin{proof}
When rank$(X^A) \le k$, the conclusion holds trivially. We consider the case for $r^A>k$. For each weighing coefficient $a_j$ in $X^A$, we have $0\le a_j\le 1$. It follows that
\begin{equation}
\begin{array}{l}
 {\left\| {(I_k - {\Phi _k}{\Phi _k}^T)X} \right\|_F} = {\left\| {(I_k - {\Phi _k}{\Phi _k}^T){u_1},\ldots,(I_k - {\Phi _k}{\Phi _k}^T){u_N}} \right\|_F} \\
 \ \ \ \ \ \ \ \ \ \ \ \ \ \ \ \ \ \ \ \ \ \ \ \ \ \ \ \ge {\left\| {(I_k - {\Phi _k}{\Phi _k}^T){a_1}{u_1},\ldots,(I_k - {\Phi _k}{\Phi _k}^T){a_N}{u_N}} \right\|_F} \\
 \ \ \ \ \ \ \ \ \ \ \ \ \ \ \ \ \ \ \ \ \ \ \ \ \ \ \ = {\left\| {(I_k - {\Phi _k}{\Phi _k}^T) X^A} \right\|_F} \\
  \ \ \ \ \ \ \ \ \ \ \ \ \ \ \ \ \ \ \ \ \ \ \ \ \ \ \ \ge {\left\| {(I_k - {{ \Phi }_k^A}({ \Phi }_k^A)^T) X^A} \right\|_F}. \\
 \end{array}
 \end{equation}
The last inequity holds because $\Phi _k^A$ provides a least Frobenius norm for the difference of matrix $X^A$ and its projection onto a $k$-dimensional subspace. Using the definition of $E_k$ and $E_k^A$, one obtains $E_k \ge E_k^A$. Especially, $E_k = E_k^A$ holds if and only if $a_i=1$ for any $i$. In this case, the ARM degenerates to the GRM.
 \hfill
\end{proof}

\medskip
\begin{proposition}\label{cor:trunc}
Let $X$ and $X^L$ be the matrices for solution snapshots and local solution snapshots. $E_k$ and $E_k^L$ denote the corresponding projection errors. Then, $E_k > E_k^L$.
\end{proposition}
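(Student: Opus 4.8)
The plan is to recognize the local reduced model as the special case of the adaptive reduced model in which every weight is binary. Setting $a_j = 1$ when $\mu_j$ is a neighbor of $\mu_i$ and $a_j = 0$ otherwise, the weighted matrix $X^A = [a_1 u_1, \ldots, a_N u_N]$ has exactly the same nonzero columns as $X^L$; since zero columns change neither the singular values nor the Frobenius norm, the two matrices share the same truncation error, so $E_k^A = E_k^L$. Lemma \ref{lem:trunc} then applies directly and yields the non-strict bound $E_k \ge E_k^L$.

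To upgrade this to the strict inequality $E_k > E_k^L$, I would reexamine the two-step chain used to prove Lemma \ref{lem:trunc}. The first step is columnwise, $\|(I - \Phi_k\Phi_k^T)u_j\|^2 \ge a_j^2 \|(I - \Phi_k\Phi_k^T)u_j\|^2$, and for a fixed index $j$ this is an equality only if $a_j = 1$ or $u_j \in \mathcal{S}_k$. Because a genuine local model discards at least one snapshot --- so that $a_j = 0$ for some non-neighbor index, i.e. $l < N$ --- the binary weights are not all equal to one. Invoking the equality characterization recorded in Lemma \ref{lem:trunc}, the degenerate case is excluded and the first inequality in the chain becomes strict, giving $E_k > E_k^L$.

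The main obstacle is exactly this passage from $\ge$ to $>$. Lemma \ref{lem:trunc} guarantees only the weak inequality, and its equality statement must be shown to fail for the local model. The delicate point is that a discarded snapshot $u_j$ could, in exceptional configurations, already lie in the global subspace $\mathcal{S}_k$; zeroing such a column leaves the projection error unchanged, and one can construct collinear or repeated snapshots for which $E_k = E_k^L$ exactly. I would therefore either present the result under the mild genericity hypothesis that not every discarded snapshot is contained in $\mathcal{S}_k$, or equivalently restrict to the situation in which the local and global subspaces genuinely differ; under this hypothesis at least one term in the columnwise comparison is strict and the conclusion $E_k > E_k^L$ follows.
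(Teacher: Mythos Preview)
Your approach is essentially identical to the paper's: the paper also constructs an extended matrix $X^L_{ext}=[b_1u_1,\ldots,b_Nu_N]$ with $b_j\in\{0,1\}$, observes that padding with zero columns leaves the POD basis and singular values unchanged so that $E_k^L=\|(I-\Phi_k^L(\Phi_k^L)^T)X^L_{ext}\|_F$, and then invokes Lemma~\ref{lem:trunc} together with the fact that some $b_j=0$ to obtain the strict inequality.

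Where you go beyond the paper is in scrutinizing the passage from $\ge$ to $>$. The paper relies on the equality clause stated at the end of the proof of Lemma~\ref{lem:trunc} (``$E_k=E_k^A$ holds if and only if $a_i=1$ for any $i$''), and simply asserts strictness because some $b_j=0$. You correctly note that this equality characterization is too strong as stated: if every discarded $u_j$ already lies in the global subspace $\mathcal{S}_k$, then zeroing those columns costs nothing in the first inequality of the chain, and one could have $E_k=E_k^L$. Your proposed remedy---a mild genericity hypothesis that at least one discarded snapshot lies outside $\mathcal{S}_k$---is exactly what is needed to make the strict inequality rigorous, and it is a refinement the paper does not make explicit.
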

\begin{proof}
We construct an $n\times N$ matrix $X^L_{ext}=[b_1 u_1, \ldots, b_N u_N]$. Let $b_j=1$ when $\mu_j$ is in the $i$th subdomain and $b_j=0$ otherwise. Essentially, $X^L_{ext}$ is an extension of $X^L$ with some 0 column vectors. The SVD of $X^L_{ext}$ and $X^L$ gives the same POD basis matrix $\Phi^L_k$ and singular values. Therefore, the projection error in Frobenius norm, $E_k^L$, is given by $\left\|{(I_k - {\Phi _k^L}({\Phi _k^L}){^T}) X^L_{ext}} \right\|_F$. Since $b_j=0$ for some $j$, by lemma \ref{lem:trunc}, it follows that
\begin{equation}
{\left\| {(I_k - {\Phi _k}{\Phi _k}^T)X} \right\|_F} >
\left\|{(I_k - {\Phi _k^L}({\Phi _k^L}){^T}) X^L_{ext}} \right\|_F.
 \end{equation}
The above equation means $E_k > E_k^L$, i.e., the projection error of the LRM is smaller than the error from the GRM.
 \hfill
\end{proof}

\medskip

Lemma \ref{lem:trunc} and proposition \ref{cor:trunc} compare projection error of the same data ensemble $\{u_i\}_{i=1}^N$, and suggests that $E^A_k$ and $E^L_k$ is usually smaller than $E_k$. We choose a constant $\epsilon_\eta$ such that $|\eta _j| < \epsilon_\eta$ for each $j$. For a specified parameter $\mu_*$, if the projection ${\tilde u_r^A}$ of $u_*$ has a form of (\ref{ustar}), the SVD truncation error $e_o^A$ of the ARM is bounded by a constant times $E_k^A$,
\begin{equation}\label{ARMerr}
\begin{array}{l}
 \left\| {{e_o^A}} \right\| = {\left\| {{{\tilde u}_r^A} - \tilde u_k^A} \right\|} = {\left\| {\sum\limits_{j = 1}^N {{\eta _j}{a_j}{u_j} - {\eta _j}{a_j}(\Phi _k^A{{(\Phi _k^A)}^T}){u_j}} } \right\|} \\
 \ \ \ \ \ \ \ \ \le \sum\limits_{j = 1}^N {{{\left\| {(I - \Phi _k^A{{(\Phi _k^A)}^T}){\eta _j}{a_j}{u_j}} \right\|}}} \le
 \epsilon_\eta \sum\limits_{j = 1}^N {{{\left\| {(I - \Phi _k^A{{(\Phi _k^A)}^T}){a_j}{u_j}} \right\|}}} = \epsilon_\eta {E_k^A}. \\
 \end{array}
 \end{equation}
The error bound of $e_o$ of the GRM has the similar property
\begin{equation}\label{GRMerr}
\begin{array}{l}
 \left\| {{e_o}} \right\| = {\left\| {{{\tilde u}_r} - {{\tilde u}_k}} \right\|} = {\left\| {\sum\limits_{j = 1}^N { {\eta _j}{a_j}{u_j} - {\eta _j}{a_j}({\Phi _k}\Phi _k^T){u_j}} } \right\|} \\
 \ \ \ \ \ \ \ \ \le \sum\limits_{j = 1}^N {{{\left\| {(I - {\Phi _k}\Phi _k^T){\eta _j}{a_j}{u_j}} \right\|}}} \le \mathop {\max }\limits_j (|\eta _j a_j|)\sum\limits_{j = 1}^N {{{\left\| {(I - {\Phi _k}\Phi _k^T){u_j}} \right\|}}} \le  {\epsilon _\eta }{{E_k}} \\
 \end{array}
\end{equation}
Combined (\ref{ARMerr}) with (\ref{GRMerr}), and using lemma (\ref{lem:trunc}), we conclude that the upper error bound of $e_o^A$ is smaller than $e_o$.

We next consider the projection error $e_r$ of $u_*$. Since the GRM uses all the solution snapshots to form a snapshot matrix, it immediately follows that $\|e_r\|\le \|e_r^A\|$, and $\|e_r\|<\|e_r^L\|$. Specifically, if a noncompact scheme is used for the ARM, we have $\|e_r^A\|=\|e_r\|$.

\subsection{Chord Iteration}\label{sec:offline}
The Newton iteration and its reduced version are widely used to solve elliptic PDEs \cite{FarhatC:11a, FarhatC:11b, PateraAT:06a}. If $F$ in Equation (\ref{hd}) contains a nonlinear term, the DEIM method \cite{SorensenDC:10a} allows for a relatively inexpensive approximation of a reduced Jacobian operator. In the online computation, the most expensive procedures of the reduced Newton iteration are to compute the $k\times k$ reduced Jacobian matrix $\hat J$ for each iteration, since evaluation of $\hat J$ is based on a series evaluations of reduced vector fields. However, since the DEIM method only provide an approximation for $\hat J$, there is no need to update $\hat J$ for each iteration. Motivated by this fact, we can apply model reduction techniques to simplify the chord iteration so that it can solve a general elliptic PDE with an additional computational saving.

The original chord iteration computes $J_0=F'(u(0))$ at the outset, and use $J_0$ to approximate the Jacobian for each iteration. Specifically, for iteration $j$, we first compute the vector $F(u(j))$. Then, solve
\begin{equation}\label{chord1}
J_0\xi (j)=-F(u(j))
\end{equation}
for $\xi (j)$. After that, update the approximating solution,
\begin{equation}\label{chord2}
u(j+1)=u(j)+\xi(j). \end{equation}

Under certain conditions, the chord iteration could obtain a convergent solution if the initial trial solution is close enough to the actual solution, as given by the following lemmas \cite{Kelley:95a}.

\medskip
\begin{lemma} \label{lem:newton}
Suppose (\ref{hd}) has a solution $u_*$, $F'$ is Lipschitz continuous with Lipschitz constant $\gamma$, and $F'(u_*)$ is nonsingular. Then there are $\bar K>0$, $\delta >0$, and $\delta_1$ such that if $u(j)\in \mathcal{B}_{u_*}(\delta)$ and $\|\Delta(u(j))\|<\delta _1$ then
\begin{equation}\label{generalerror}
u(j+1)=u(j)-(F'(u(j))+\Delta(u(j)))^{-1}(F(u(j))+\epsilon(u(j)) )
\end{equation}
is well-defined and satisfies
\begin{equation}
\|e(j+1)\|\le \bar K(\|e(j)\|^2+\|\Delta (u(j))\|
\|e(j)\|+\|\epsilon(u(j))\|),
\end{equation}
where $e(j):=u_*-u(j)$ denotes the error for iteration $j$.
\end{lemma}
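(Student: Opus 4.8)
The plan is to follow the classical perturbation analysis for inexact Newton-type iterations. The two ingredients I would use are (i) the Banach perturbation lemma, to guarantee that the perturbed Jacobian $F'(u(j))+\Delta(u(j))$ is invertible with a uniformly bounded inverse on a suitable neighborhood of $u_*$, and (ii) the integral (mean-value) representation of $F(u(j))$ about the root $u_*$, which is where the quadratic error term comes from. These two facts together produce the three advertised terms in the error bound.

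First I would fix the neighborhood and the inverse bound. Since $F'(u_*)$ is nonsingular, set $\beta := \|F'(u_*)^{-1}\|$. Lipschitz continuity of $F'$ gives $\|F'(u)-F'(u_*)\| \le \gamma \|u-u_*\|$, so for $u(j)\in \mathcal{B}_{u_*}(\delta)$ we have $\|F'(u(j))-F'(u_*)\|\le \gamma\delta$. Choosing $\delta$ and $\delta_1$ small enough that $\beta(\gamma\delta + \delta_1) \le 1/2$, and writing $F'(u(j))+\Delta(u(j)) = F'(u_*)\big(I + F'(u_*)^{-1}(F'(u(j))-F'(u_*)+\Delta(u(j)))\big)$, the Banach lemma shows this matrix is invertible with inverse bounded by $2\beta$ whenever $\|\Delta(u(j))\|<\delta_1$. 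This makes the iteration well-defined and supplies the uniform constant I need for $M^{-1}$, where $M:=F'(u(j))+\Delta(u(j))$.

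Next I would derive the error recursion. Subtracting the update from $u_*$ gives
\begin{equation}
e(j+1) = e(j) + M^{-1}\big(F(u(j))+\epsilon(u(j))\big).
\end{equation}
Using $F(u_*)=0$ and the fundamental theorem of calculus, $F(u(j)) = -\bar{F}'\, e(j)$ with $\bar{F}' := \int_0^1 F'(u_* + t(u(j)-u_*))\,dt$, so that
\begin{equation}
e(j+1) = M^{-1}\big(F'(u(j))+\Delta(u(j))-\bar{F}'\big)e(j) + M^{-1}\epsilon(u(j)).
\end{equation}
The Lipschitz bound then yields $\|F'(u(j))-\bar{F}'\| \le \tfrac{\gamma}{2}\|e(j)\|$, and combining this with $\|M^{-1}\|\le 2\beta$ and the triangle inequality gives exactly the three terms, with $\bar{K} := 2\beta\max\{\gamma/2,\,1\}$.

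The main obstacle is the bookkeeping in the first step: one must choose $\delta$ and $\delta_1$ jointly so that the Jacobian perturbation arising from both the displacement $u(j)-u_*$ and the additive term $\Delta(u(j))$ stays small enough for the Banach lemma to apply, since only then is $\|M^{-1}\|$ controlled uniformly and the iteration guaranteed to be well-defined. Everything after that is a routine triangle-inequality estimate: the quadratic term $\|e(j)\|^2$ comes solely from the averaged-Jacobian discrepancy $F'(u(j))-\bar{F}'$, the mixed term $\|\Delta(u(j))\|\,\|e(j)\|$ from the Jacobian perturbation, and the $\|\epsilon(u(j))\|$ term directly from the residual error. The chord iteration of (\ref{chord1})--(\ref{chord2}) is then the special case $\Delta(u(j))=J_0-F'(u(j))$ and $\epsilon(u(j))=0$.
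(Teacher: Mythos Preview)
Your proof is correct and is precisely the classical argument. Note, however, that the paper does not supply its own proof of this lemma: it states the result together with Lemma~\ref{lem:chord} and then writes ``We suggest readers to refer to \cite{Kelley:95a} for proofs and more details.'' Your outline---Banach perturbation lemma for the uniform bound on $M^{-1}$, the integral mean-value representation $F(u(j))=-\bar F' e(j)$, and the Lipschitz estimate $\|F'(u(j))-\bar F'\|\le \tfrac{\gamma}{2}\|e(j)\|$---is exactly the proof given in Kelley's monograph, so there is nothing to compare.
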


For chord iteration, $\epsilon (u(j))=0$, $\Delta(u(j))=F'(u_0)-F'(u(j))$. If $u_0, u(j) \in \mathcal{B}_{u_*}(\delta)$, $\|\Delta(u(j))\| \le \gamma \|u_0-u(j)\|\le \gamma (\|e(0)\|+\|e(j)\|)$. Using lemma \ref{lem:newton}, the following lemma is obtained, where $K_C:=\bar K(1+2 \gamma)$.

\medskip
\begin{lemma} \label{lem:chord}
Let the assumptions of lemma \ref{lem:newton} hold. Then there are $K_C>0$ and $\delta>0$ such that if $u_0\in \mathcal{B}_{u_*}(\delta)$ the chord iteration converges linearly to $u_*$ and
\begin{equation}
\|e(j+1)\|\le K_C \|e(0)\| \|e(j)\|.
\end{equation}
\end{lemma}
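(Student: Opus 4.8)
The plan is to obtain the stated linear estimate directly from the one-step bound of Lemma \ref{lem:newton}, specialized to the chord iteration, and then to run an induction that simultaneously controls the error and keeps every iterate inside the ball where that bound is valid. First I would record the specialization already indicated in the text: for the chord iteration $\epsilon(u(j))=0$ and $\Delta(u(j))=F'(u_0)-F'(u(j))$, so that, whenever $u(j)\in\mathcal{B}_{u_*}(\delta)$ and $\|\Delta(u(j))\|<\delta_1$,
\begin{equation}
\|e(j+1)\|\le \bar K\left(\|e(j)\|^2+\|\Delta(u(j))\|\,\|e(j)\|\right).
\end{equation}
Invoking Lipschitz continuity of $F'$ and the triangle inequality I would bound $\|\Delta(u(j))\|\le\gamma(\|e(0)\|+\|e(j)\|)$, turning the one-step estimate into
\begin{equation}
\|e(j+1)\|\le \bar K\|e(j)\|^2+\bar K\gamma\|e(0)\|\,\|e(j)\|+\bar K\gamma\|e(j)\|^2.
\end{equation}

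Next I would fix $\delta$ small enough to reconcile the three requirements. Taking $\delta$ no larger than the radius supplied by Lemma \ref{lem:newton}, demanding $2\gamma\delta<\delta_1$ (so that $\|\Delta(u(j))\|<\delta_1$ holds once both $u_0$ and $u(j)$ lie in the ball), and demanding $K_C\delta<1$, I would then argue by induction on $j$ that $\|e(j)\|\le\|e(0)\|<\delta$ and that the claimed bound holds. The base case $j=0$ is immediate since $\Delta(u(0))=0$, giving $\|e(1)\|\le\bar K\|e(0)\|^2\le K_C\|e(0)\|\,\|e(0)\|$. For the inductive step, the hypothesis $\|e(j)\|\le\|e(0)\|$ lets me absorb the two quadratic terms into the mixed term: since $\bar K\|e(j)\|^2\le\bar K\|e(0)\|\,\|e(j)\|$ and $\bar K\gamma\|e(j)\|^2\le\bar K\gamma\|e(0)\|\,\|e(j)\|$, the estimate collapses to
\begin{equation}
\|e(j+1)\|\le \bar K(1+2\gamma)\|e(0)\|\,\|e(j)\|=K_C\|e(0)\|\,\|e(j)\|,
\end{equation}
which is exactly the assertion.

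Finally, linear convergence and the persistence of the inductive hypothesis come out together: because $K_C\|e(0)\|\le K_C\delta<1$, the bound gives $\|e(j+1)\|\le(K_C\|e(0)\|)\|e(j)\|<\|e(j)\|\le\|e(0)\|$, so $u(j+1)$ again lies in $\mathcal{B}_{u_*}(\delta)$ (closing the induction), and iterating yields $\|e(j)\|\le(K_C\|e(0)\|)^j\|e(0)\|\to0$ at a linear rate. I expect the only real obstacle to be the bookkeeping that keeps the iterates in the ball: the one-step bound of Lemma \ref{lem:newton} is licensed only while $u(j)\in\mathcal{B}_{u_*}(\delta)$ and $\|\Delta(u(j))\|<\delta_1$, so the choice of $\delta$ and the monotone-decrease argument must be interleaved with the error estimate rather than applied after it; everything else is a routine manipulation of the quadratic terms.
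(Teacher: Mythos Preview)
Your argument is correct and follows exactly the route the paper indicates: specialize Lemma~\ref{lem:newton} to the chord iteration with $\epsilon(u(j))=0$ and $\Delta(u(j))=F'(u_0)-F'(u(j))$, bound $\|\Delta(u(j))\|\le\gamma(\|e(0)\|+\|e(j)\|)$ via Lipschitz continuity, and collapse the resulting terms using $\|e(j)\|\le\|e(0)\|$ to arrive at $K_C=\bar K(1+2\gamma)$. The paper itself only sketches this reduction in the paragraph preceding the lemma and then defers to \cite{Kelley:95a} for the details; your induction supplying the monotone decrease $\|e(j)\|\le\|e(0)\|$ and the bookkeeping on $\delta$ (including $2\gamma\delta<\delta_1$ and $K_C\delta<1$) is precisely what that reference contains.
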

We suggest readers to refer to \cite{Kelley:95a} for proofs and more details. In the chord method, the complexity of (\ref{chord1}), (\ref{chord2}), and direct evaluation of $F(u)$ depends on $n$. If the dimension $n$ in (\ref{hd}) is very large, the chord method could be still prohibitively expensive for real time computation. For this reason, a model reduction approach is applied for decreasing the computational cost based on the chord method. A manifold learning procedure is used to extract the dominant modes from the original data. Usually, this procedure is very intensive in exchange for greatly decreased online cost for each new input-output evaluation. In the online stage, a reduced version of the chord method is used to solve for the solution. Since the solution can be solved in a low dimensional subspace, the complexity of online computation can be very low.

\subsection{Adaptive Reduced Model based on Chord Iteration}
In this subsection, the reduced chord iteration is combined with the ARM to solve an elliptic PDE. In the offline stage, for each input $\mu_i$, the solution snapshot $u_i$, the nonlinear snapshot $g_i$, and the corresponding Jacobian matrix $J_i$ are recorded to form an ensemble $\{\mu_i, u_i, g_i, J_i\}_{i=1}^N$. In the offline stage, we can use POD-Galerkin approach (for a linear PDE) or POD-DEIM (for a nonlinear PDE) to accelerate the computation.

As described in section \ref{sec:ARM}, for each input parameter $\mu_*$, one must first determine one subdomain that $\mu_*$ resides; we choose the subdomain $i$ such that $\|\mu_*-\mu_i\|$ get a minimal value. If  $\{\mu_i\}_{i=1}^N$ represents an  integer lattice in the parameter domain, then we can immediately find the $i$. Otherwise, searching the optimal $i$ is based on the data structure of the precomputed data ensemble. Usually this process is always affordable as long as  $\dim(\mu_*) \ll n$.

Then one can obtain the POD basis matrix $\Phi_k^A (\mu_i)$.
Let $v(j)\in \mathbb{R}^k$ be the reduced state at iteration $j$, $v(0)=v_{i}=(\Phi_k^A)^T u_{i}$ be the initial trial solution, $\hat J_i=(\Phi_k^A)^T J_i \Phi_k^A \in \mathbb{R}^{k\times k}$ be the reduced Jabobian. The Galerkin projection can be used to form reduced equations for (\ref{chord1}) and (\ref{chord2}) in chord method,
\begin{equation} \label{galerkinnewton}
\hat J_i \hat \xi (j)=-(\Phi_k^A)^T F(\Phi_k^A v(j)),
\end{equation}
\begin{equation}\label{redupdate}
 v(j+1)=v(j)+\hat \xi (j).
\end{equation}

As mentioned in the previous section, POD-Galerkin approach cannot effectively reduce the complexity for high dimensional systems when a general nonlinearity is present, since the cost of computing $(\Phi_k^A)^T F(\Phi_k^A v)$ depends on the dimension of the original system, $n$. In order to obtain significant speedups for a general nonlinear system, the DEIM can be used to approximate (\ref{redupdate}).

When $J_{i}$ is a symmetric positive definite (SPD) matrix, $\hat J_{i}$ is a nonsingular matrix and (\ref{galerkinnewton}) is well-defined \cite{SaadY:03a}. Moreover, it minimizes the mean square error in the search direction for each iteration. Unfortunately, the Jacobians of a nonlinear problem are not, in general, SPD matrices. If $\hat J_{i}$ is (near) singular, one can always choose another parameter nearby to obtain a nonsingular Jacobian. Otherwise, the original Jacobian is zeros almost everywhere in a neighbourhood of $(\mu_{i}, u_{i})$, which implies that the vector field of subdomain $i$ is a constant. Consequently, one can directly avoid online computation in this region.

Algorithm \ref{alg:redchord} lists all the procedures of the reduced chord iteration. An ensemble $\{\mu_i, u_i, g_i, J_i)\}_{i=1}^N$ is precomputed offline. The POD basis and the collateral POD basis are respectively computed in step 1. Some matrices involving the DEIM approximation are precomputed in step 2. Step 3 and step 4 involves computing the reduced Jacobian and the reduced initial state for each subdomain.
In the online stage, step 5 determines the subdomain $i$ where a new input parameter $\mu_*$ resides.  Step 6 simply picks up the trial solution and the estimated Jacobian computed in step 4 during the offline stage. This step does not involves any real computations. Step 5 and 6 are carried out only once. Steps 7-9 form the main loop for the online computation using the subspace coordinates and their complexity is independent of $n$. Therefore, the online computation of algorithm \ref{alg:redchord} is very efficient.

\begin{algorithm}
\caption{Reduced Chord method} \label{alg:redchord}
\begin{algorithmic}
 \REQUIRE
Precomputed ensemble $\{\mu_i, u_i, g_i, J_i)\}_{i=1}^N$.
\ENSURE Solution $u_*$ that satisfies $F(u)=0$. \STATE
\textbf{Offline:}
 \FOR {subdomain $i=1$ to $N$ }
\STATE 1: Use the SVD to compute the adaptive POD
basis $\Phi_k^A(\mu_{i})$ for the weighted solution matrix $X^A_i$, and the collateral
POD basis $\Psi_m^A(\mu_{i})$ for the weighted matrix for the nonlinear vector terms.
 \STATE 2: Use the DEIM to compute $\tilde L(\mu_{i})=(\Phi_k^A)^T L \Phi_k^A$ for the linear operator and $(\Phi_k^A)^T
\Psi_m^A (P^T\Psi_m^A)^{-1}$ for the nonlinear term.
 \STATE 3: Compute the reduced Jacobian $\hat J_{i}$. If it
 is nonsingular go back to step 1 and consider another point in subdomain $i$. If the new reduced Jacobian is still singular, label the subdomain $i$ as ``constant''.
 \STATE 4: Compute solution snapshots in the reduced coordinate system
$v_{i}=(\Phi_k^A)^T u_{i}$.
\ENDFOR
\STATE \textbf{Online:} \STATE 5: Choose the label $i\in\{1, \ldots, N\}$  such that the distance $\|\mu_*-\mu_{i}\|$ obtain the minimal value  and subdomain $i$ is not  labeled as ``constant''.
\STATE 6: Set
$v_{i}$ as the initially trial solution $v(0)$ in the reduced
coordinate system and set $\hat J_{i}$ as the approximated
Jacobian.
 \FOR {$j=0, \ldots,$ (until
convergence)}
 \STATE 7: Compute $(\Phi_k^A)^T \hat F(\Phi_k^A
 v(j))$, where $\hat F$ is an approximation of $F$ by DEIM.
 \STATE 8: Solve $\hat J_i \hat \xi (j)=-(\Phi_k^A)^T F(\Phi_k^A v(j))$,
 as (\ref{galerkinnewton}).
 \STATE 9: Update $v(j+1)=v(j)+\hat \xi(j)$, as
 (\ref{redupdate}).
\ENDFOR \STATE 10: Set $\hat u_*=\Phi_k^A v(j+1)$.
\end{algorithmic}
\end{algorithm}

For a fixed $\tau$ in (\ref{romelp}), the reduced algebraic equation formed by the Galerkin projection is given by
\begin{equation}\label{relpde}
(\Phi^A_k)^T F(\Phi^A_k v)=0.
\end{equation}
Usually, the reduced chord iteration could not converge to the actual solution, $v_*$, of (\ref{relpde}) since DEIM provides an extra error for the approximation for the vector field. However, suppose the DEIM approximation gives a uniform error bound, $\varepsilon_F$, for an interested domain of $u$, an error bound of algorithm \ref{alg:redchord} could be obtained in terms of $\varepsilon_F$. In the following lemma, we slightly abuse the notation and use $e$ to denote the error between $v_*$ and the approximating solution in reduced chord iteration. Meanwhile the superscript $A$ is removed for convenience.

\medskip
\begin{lemma} \label{lem:reducedchord}
Suppose (\ref{relpde}) has a solution $v_*$, $F'$ is Lipschitz continuous with Lipschitz constant $\gamma$, and $F'(\Phi_k v_*)$ is nonsingular. Suppose the DEIM approximation gives a uniform error bound $\|F(u)-\hat F(u)\|<\epsilon_F$ for any $u\in \mathcal{B}_{\Phi_k v_*} $. Let  $e(j):=v_*-v(j)$ denote the error for iteration $j$. Then there are $\bar K>0$, $K_C>0$, $\delta>0$ such that if $u_i\in \mathcal{B}_{\Phi_k v_*}(\delta)$, then the reduced chord iteration approach to $v_*$ with the error bound of $\|e(j)\|$ given by $\bar K{\varepsilon _F}/(1-K_C \delta )$ as $j \to \infty$.
\end{lemma}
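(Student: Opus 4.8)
The plan is to recast the reduced chord iteration of Algorithm~\ref{alg:redchord} as a perturbed Newton iteration for the \emph{exact} reduced map $G(v):=\Phi_k^T F(\Phi_k v)$, whose root is $v_*$ (so that $G(v_*)=0$ by~(\ref{relpde})), and then invoke Lemma~\ref{lem:newton}. Writing the update $v(j+1)=v(j)-\hat J_i^{-1}\Phi_k^T\hat F(\Phi_k v(j))$ and splitting off the DEIM residual,
\begin{equation}
\Phi_k^T\hat F(\Phi_k v(j)) = G(v(j)) + \epsilon(v(j)), \qquad \epsilon(v(j)):=\Phi_k^T(\hat F-F)(\Phi_k v(j)),
\end{equation}
while setting $\Delta(v(j)):=\hat J_i-G'(v(j))$ with $G'(v)=\Phi_k^T F'(\Phi_k v)\Phi_k$, the iteration takes exactly the form
\begin{equation}
v(j+1)=v(j)-\bigl(G'(v(j))+\Delta(v(j))\bigr)^{-1}\bigl(G(v(j))+\epsilon(v(j))\bigr)
\end{equation}
covered by Lemma~\ref{lem:newton}. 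Note that $G'$ inherits the Lipschitz constant $\gamma$ of $F'$, since $\|\Phi_k\|=1$.

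Next I would bound the two perturbation terms on the ball $\mathcal{B}_{\Phi_k v_*}(\delta)$. Because $\Phi_k$ has orthonormal columns, $\|\Phi_k^T x\|\le\|x\|$, so the DEIM hypothesis gives $\|\epsilon(v(j))\|\le\varepsilon_F$ as long as $\Phi_k v(j)\in\mathcal{B}_{\Phi_k v_*}$, i.e.\ as long as $\|e(j)\|$ stays small. For the Jacobian mismatch, identifying the recorded $\hat J_i=\Phi_k^T J_i\Phi_k$ with $\Phi_k^T F'(u_i)\Phi_k$ (the residual parameter shift $\mu_i\to\mu_*$ being absorbed by smoothness of $f$), Lipschitz continuity yields, mimicking the passage from Lemma~\ref{lem:newton} to Lemma~\ref{lem:chord},
\begin{equation}
\|\Delta(v(j))\|\le\gamma\|u_i-\Phi_k v(j)\|\le\gamma\bigl(\|u_i-\Phi_k v_*\|+\|e(j)\|\bigr)\le\gamma(\delta+\|e(j)\|),
\end{equation}
using $\|u_i-\Phi_k v_*\|\le\delta$ and $\|\Phi_k v_*-\Phi_k v(j)\|=\|e(j)\|$.

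Substituting these bounds into the estimate of Lemma~\ref{lem:newton} and using $\|e(j)\|\le\delta$ (the induction hypothesis, see below) to collapse the quadratic terms gives the scalar affine recursion
\begin{equation}
\|e(j+1)\|\le\bar K\bigl(\|e(j)\|^2+\gamma(\delta+\|e(j)\|)\|e(j)\|+\varepsilon_F\bigr)\le K_C\delta\,\|e(j)\|+\bar K\varepsilon_F,
\end{equation}
with $K_C:=\bar K(1+2\gamma)$ exactly as in Lemma~\ref{lem:chord}. Provided $\delta$ is chosen small enough that $K_C\delta<1$, unrolling this linear recursion gives $\|e(j)\|\le(K_C\delta)^j\|e(0)\|+\bar K\varepsilon_F\,(1-(K_C\delta)^j)/(1-K_C\delta)$, whence $\limsup_{j\to\infty}\|e(j)\|\le\bar K\varepsilon_F/(1-K_C\delta)$, which is the claimed bound.

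The step I expect to be the main obstacle is the bookkeeping that keeps every iterate inside $\mathcal{B}_{\Phi_k v_*}(\delta)$, since all three bounds above (well-posedness of $\hat J_i^{-1}$ through Lemma~\ref{lem:newton}, the DEIM bound, and the Lipschitz bound) are only valid there. I would close this by induction: first note $\|e(0)\|=\|v_*-\Phi_k^T u_i\|\le\|u_i-\Phi_k v_*\|\le\delta$ because orthogonal projection is nonexpansive; then shrink $\delta$ and require $\varepsilon_F$ small enough that $K_C\delta^2+\bar K\varepsilon_F\le\delta$, so that $\|e(j)\|\le\delta$ propagates to $\|e(j+1)\|\le\delta$ through the recursion. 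A secondary technical point is verifying nonsingularity of the reduced Jacobian $\hat J_i$, needed for the iteration to be well-defined; this is guaranteed for $\delta$ small by the nonsingularity of $G'(v_*)$ together with step~3 of Algorithm~\ref{alg:redchord}.
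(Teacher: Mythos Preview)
Your proposal is correct and follows essentially the same route as the paper: cast the reduced chord update as a perturbed Newton step for the reduced map $\Phi_k^T F(\Phi_k v)$, identify $\epsilon(v(j))=\Phi_k^T(\hat F-F)(\Phi_k v(j))$ and $\Delta(v(j))=\hat J_i-\Phi_k^T F'(\Phi_k v(j))\Phi_k$, bound them by $\varepsilon_F$ and $\gamma(\|e(0)\|+\|e(j)\|)$ respectively, apply Lemma~\ref{lem:newton} to obtain $\|e(j+1)\|\le K_C\delta\|e(j)\|+\bar K\varepsilon_F$ with $K_C=\bar K(1+2\gamma)$, and pass to the limit. Your write-up is in fact slightly more careful than the paper's---you make explicit the induction that keeps $\|e(j)\|\le\delta$ and you actually unroll the affine recursion---but the argument is the same.
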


\begin{proof}
In algorithm \ref{alg:redchord}, a sequence $\{v(j)\}$ is obtained by the following iteration rule,
\begin{equation}
v(j + 1) = v(j) - {\hat J_i^{ - 1}}(\Phi _k^T\hat F({\Phi
_k}v(j))),
\end{equation}
where $\hat J_i=\Phi _k^T{J_i}{\Phi _k}$ for $J_i=F'(u_i)$. The above equation could be rewritten in the form similar to
(\ref{generalerror}),
\begin{equation}\label{error}
v(j+1)=v(j)-(\Phi_k^T F'(\Phi_k^T
v(j))+\Delta(v(j)))^{-1}(\Phi_k^T F(\Phi_k v(j))+\epsilon(v(j)) ),
\end{equation}
where $ \epsilon (v(j))= \Phi _k^T\hat F({\Phi _k}v(j))-\Phi_k^T F(\Phi_k v(j))$, and $\Delta (v(j))=\hat J_i-\Phi_k^T F'(\Phi_kv(j))\Phi_k$.

If $u_i, \Phi_k v(j) \in \mathcal{B}_{\Phi_k v_*}(\delta)$, one obtains $\|\epsilon (v(j))\|\le \| \hat F(\Phi_k v(j)) - F(\Phi_k v(j)) \| <\epsilon_F$, and $\|\Delta(v(j))\| \le \|F'(u_i)-F'(\Phi_kv(j))\|\le \gamma (\|e(0)\|+\|e(j)\|)$. Using lemma \ref{lem:newton}, and define $K_C:=\bar K(1+2 \gamma)$, one obtains
 \begin{equation}\label{chordcon}
\left\| {e(j + 1)} \right\| < \bar K\left\| {e(j)} \right\|\left(
{\gamma \left\| {{e(0)}} \right\| + (1 + \gamma )\left\| {e(j)}
\right\|} \right) + \bar K{\varepsilon _F} \le {K_C}\delta
\left\| {e(j)} \right\| + \bar K{\varepsilon _F}.
\end{equation}
Let $\delta$ small enough such that $K_C \delta <1$. It follows that $\|e(j)\|$ is bounded by $\bar K{\varepsilon _F}/(1-K_C\delta)$ as $j \to \infty$.
 \hfill
\end{proof}
\medskip

The reduced chord iteration inherits one advantage of the standard chord iteration, i.e., it does not  compute the Jacobian at each iteration. Therefore, the per-iteration cost of this method is lower than the cost from a reduced model formed by Newton iteration. Specifically, if $\alpha(m)$ denotes the cost of evaluating $m$ components of $F$, then the cost of approximating a nonlinear vector field in the online stage is $O(\alpha(m)+4mk)$ via the DEIM \cite{SorensenDC:10a}. Let $\gamma$ be the average number of nonzero entries per row of the Jacobian. If approximating a Jacobian is computed during the online stage, an extra $O(\alpha(m)+2mk + 2\gamma mk + 2mk^2)$ cost is need via the DEIM in the simple case when $F$ is evaluated componentwise at $u$ \cite{SorensenDC:10a}. In the worst case, the complexity of computing $\hat J$ would  still depend on $n$ if the $J$ is dense. Even in
the best scenario when $J$ is diagonal,  the computational cost of a Jacobian matrix is  no less than the cost of the corresponding nonlinear vector field.

On the other hand, a reduced Newton iteration cannot have quadratic convergence rate since the DEIM approximation error of the Jacobian matrix or the vector field cannot be ignored. Therefore, the reduced chord iteration is more efficient for solving a large-scale nonlinear algebraic equation in general.

Notice that if the error bound of DEIM approximation, $\varepsilon_F$ in (\ref{chordcon}), approaches zero, the reduced chord iteration converges linearly to $v_*$.  Moreover, $\varepsilon_F$ is bounded by a constant times $\|(I-\Psi_m\Psi_m^T)F\|$~\cite{SorensenDC:10a}, which indicates an optimal collateral subspace is desired to decrease $\varepsilon_F$.

\subsection{Numerical Example}
In this subsection, the ARM is applied to an elliptic PDE (from \cite{PateraAT:06a} and \cite{SorensenDC:10a}),
\begin{equation}\label{staticeq}
 - {\nabla ^2}u(x, y) +
 \frac{\mu_1}{\mu_2}({{\rm{e}}^{\mu_2 u}} - 1)
 = 100\cos (2\pi x)\cos (2\pi y),
\end{equation}
with homogeneous Dirichlet boundary conditions, $u(0,y) = u(1,y) = u(x,0) = u(x,1) = 0$. The spatial variables $(x, y) \in \Omega= [0,1]^2$ and the parameters satisfy $\mu = (\mu_1, \mu_2) \in \mathcal{D} = [0.01,10]^2 \subset \mathbb{R}^ 2$. The ``real'' solution is solved by the Newton iteration resulting from a finite difference discretization. The spatial grid points $(x_i , y_j)$ are equally spaced in $\Omega$ for $i, j = 1,\ldots,50$. The full dimension for the state variable $u$ is then $n = 2500$. In the offline stage,
the reduced models are constructed based on 121 precomputed snapshots corresponding to 121 input parameters that are uniformly distributed in the parameter domain.   Thus, the parameter domain $\mathcal{D}$ is uniformly partitioned into 121 subdomains.
In the online stage,  all the reduced models are tested based on 200 randomly selected parameters.

\begin{figure}
\begin{center}
\begin{minipage}{0.48\linewidth} \begin{center}
\includegraphics[width=1\linewidth]{./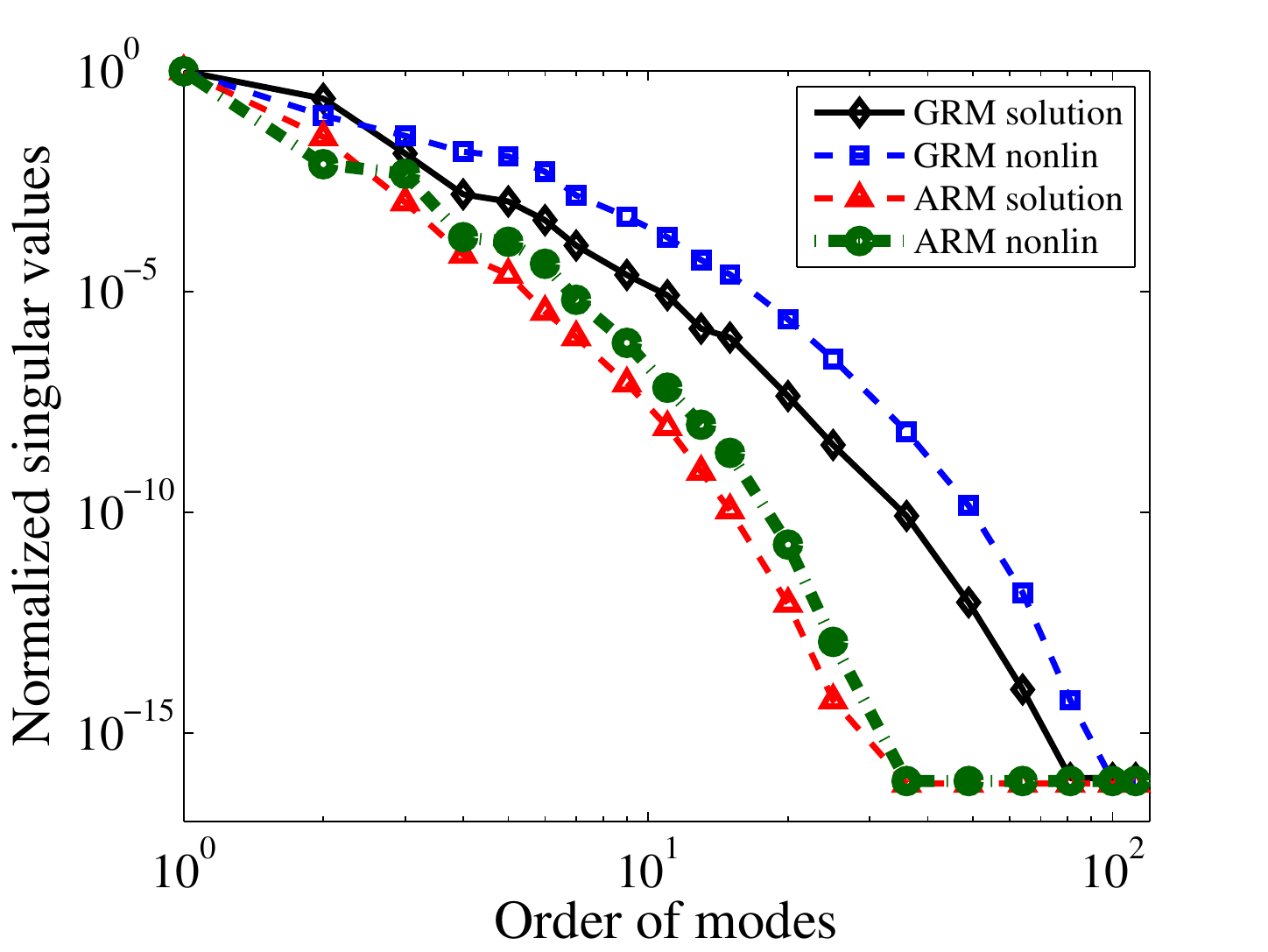}
\end{center} \end{minipage}\\
 \caption{(Color online.) Simulation results for the elliptic PDE (\ref{staticeq}). (a) Normalized singular values of the matrices of solution snapshots and nonlinear snapshots based on 121 precomputed parameters. The average value of the singular values in  the adaptive reduced model (ARM) decrease faster than the singular values in the global reduced model (GRM).} \label{fig:static}\vspace{-3mm}
\end{center}
\end{figure}

Figure \ref{fig:static} shows the normalized singular values for the solution snapshots of (\ref{staticeq}) and nonlinear snapshots $s(u,\mu)=\mu_1/\mu_2(\exp(\mu_2 u)-1)$ for  the precomputed data ensemble.  Since each subdomain has a different snapshot matrix for the ARM case, we compute the average value for the singular values.  All the singular values are normalized by  the largest one. Compared with the single values from GRM, those in ARM have a faster decreasing rate, which means that in order to obtain the same level of accuracy, ARM needs fewer modes to represent the original system. We also observe that singular values of solution matrices decrease faster than the singular values of nonlinear matrices. Thus, we set the dimension of nonlinear-term subspace twice as the dimension of the solution subspace for each individual test such that the DEIM approach can provide a good approximation for the original POD.

\begin{figure}
\begin{center}
\begin{minipage}{0.48\linewidth} \begin{center}
\includegraphics[width=1\linewidth]{./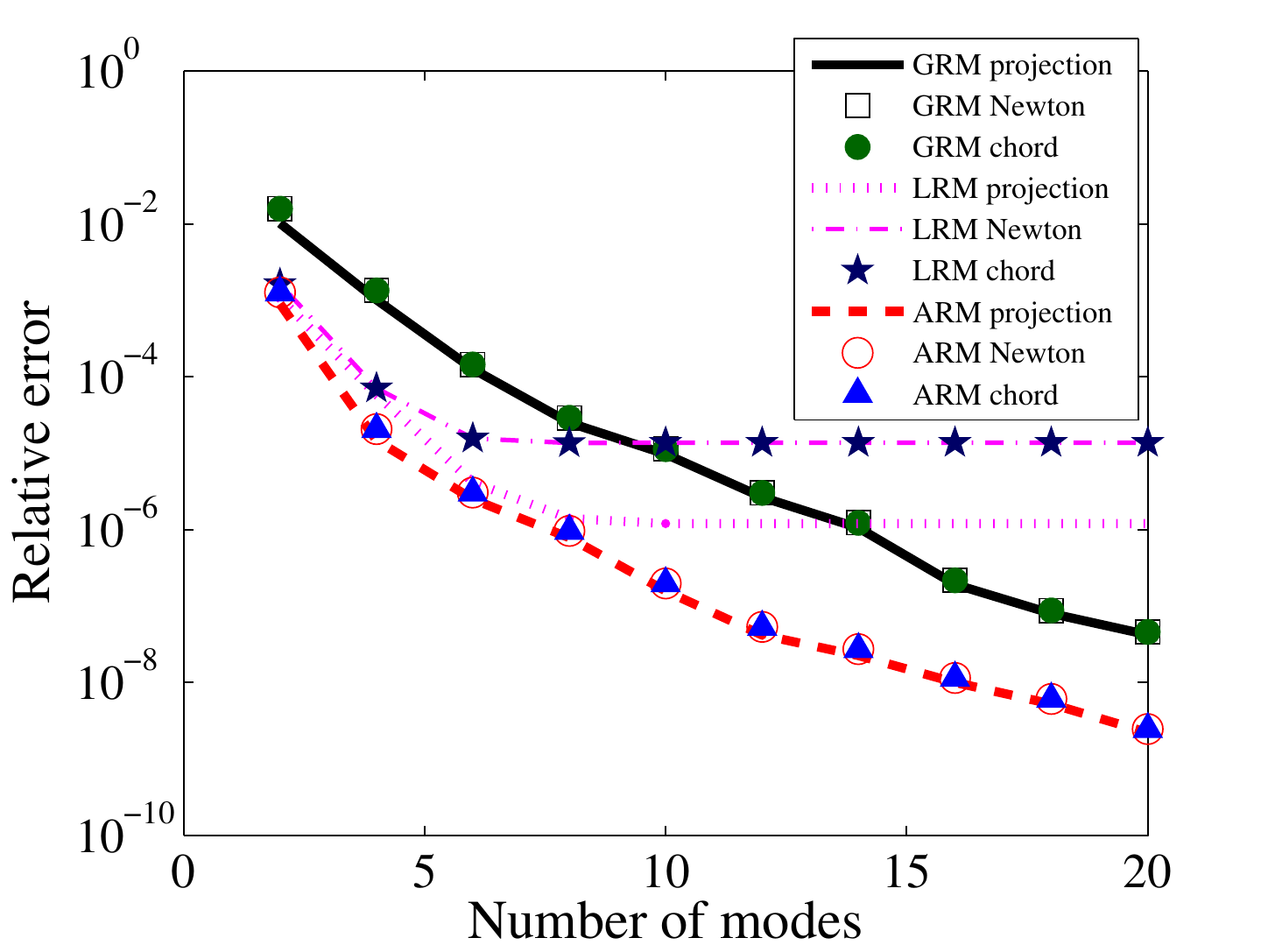}
\end{center} \end{minipage}
\begin{minipage}{0.48\linewidth} \begin{center}
\includegraphics[width=1\linewidth]{./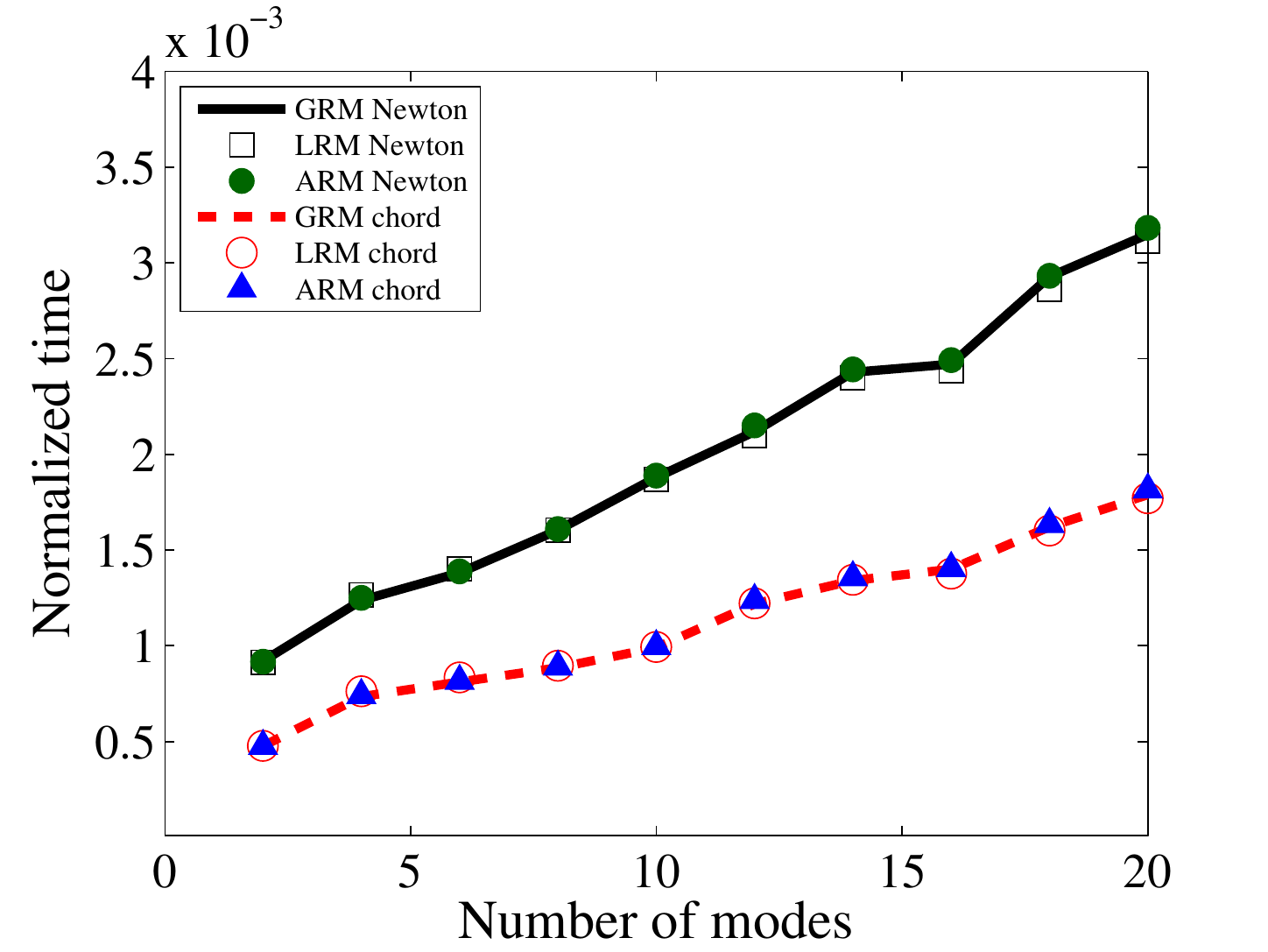}
\end{center} \end{minipage}\\
\begin{minipage}{0.48\linewidth}\begin{center} (a) \end{center}\end{minipage}
\begin{minipage}{0.48\linewidth}\begin{center} (b) \end{center}\end{minipage}
 \caption{(Color online.) (a) The relative error, $\left\|u(\mu ) - \hat u(\mu )\right\|/ \left\|u(\mu)\right\|$, for the reduced Newton iteration and the reduced chord iteration for the elliptic PDE (\ref{staticeq}). These errors are averaged over a set of  200 randomly selected parameters $\mu$ that were not used to obtain the sample snapshots. When the DEIM approximation is used, we set the subspace dimension for the nonlinear term $s(u,\mu)$ twice as the subspace dimension for the solution $u$ to balance the POD error and the error from DEIM approximation. Therefore, the number of POD modes $k$ is  $2, 4, \ldots, 20$, and the number of the nonlinear-term modes is $4, 8, \ldots 40$. The projection error, $e_k$, denotes relative error between $u$ and its projection onto the subspace $\mathcal{S}_k$. (b) Average running time (scaled with the running time for the full system) for each reduced Newton iteration and reduced chord iteration.} \label{fig:modes}\vspace{-3mm}
\end{center}
\end{figure}

Figure \ref{fig:modes}(a) shows that reduced chord iteration could obtain the same accuracy as the reduced Newton iteration, for all GRM, LRM, and ARM cases. On the other hand, the DEIM reduced system formed by the ARM has a much higher accuracy than GRM and LRM for the same dimension.
When $k$ is relatively small, the SVD-based truncation error $e_o$ is the dominant term of $e_k$; as  $k$ increases,  $e_o$ diminishes and finally the projection error $e_r$ takes the dominance in $e_k$. For the LRM case, we choose 9 solution snapshots and 9 nonlinear term snapshots for each local basis.
Hence, compared with the GRM,  the LRM yields a more accurate solution when $k$ is small and a less accurate solution when $k$ is large.
 Especially, when $k\ge 9$, $e_o=0$ holds exactly for the LRM. However, since the LRM has a larger $e_r$ than the other two methods,  it cannot obtain a high accuracy solution even though $k$ is very large. Moreover, for the LRM case, the numerical error of the reduced Newton method and the reduced chord method can not approach to $e_k$, which implies that the DEIM approximation based on 9 effective modes yielders an addition error $e_i$. On the contrary, the ARM can balance $e_o$ and $e_r$ for a wide range of subspace dimensions, which yield a better solution.

 Figure \ref{fig:modes}(b) shows the scaled running time
for the reduced Newton method and the reduced chord method. Both approaches obtain more than 200 times speedups for each iteration.
 For each iteration, the reduced chord iteration only update $n$ elements in the  vector field, while the reduced Newton iteration update an extra $n$ nonzero elements in the Jacobian matrix.  Thus, one can expect that the reduced chord iteration takes about half time compared with reduced Newton iteration.  This  expectation is also verified by the simulation.

\begin{figure}
\begin{center}
\begin{minipage}{0.32\linewidth} \begin{center}
\includegraphics[width=1\linewidth]{./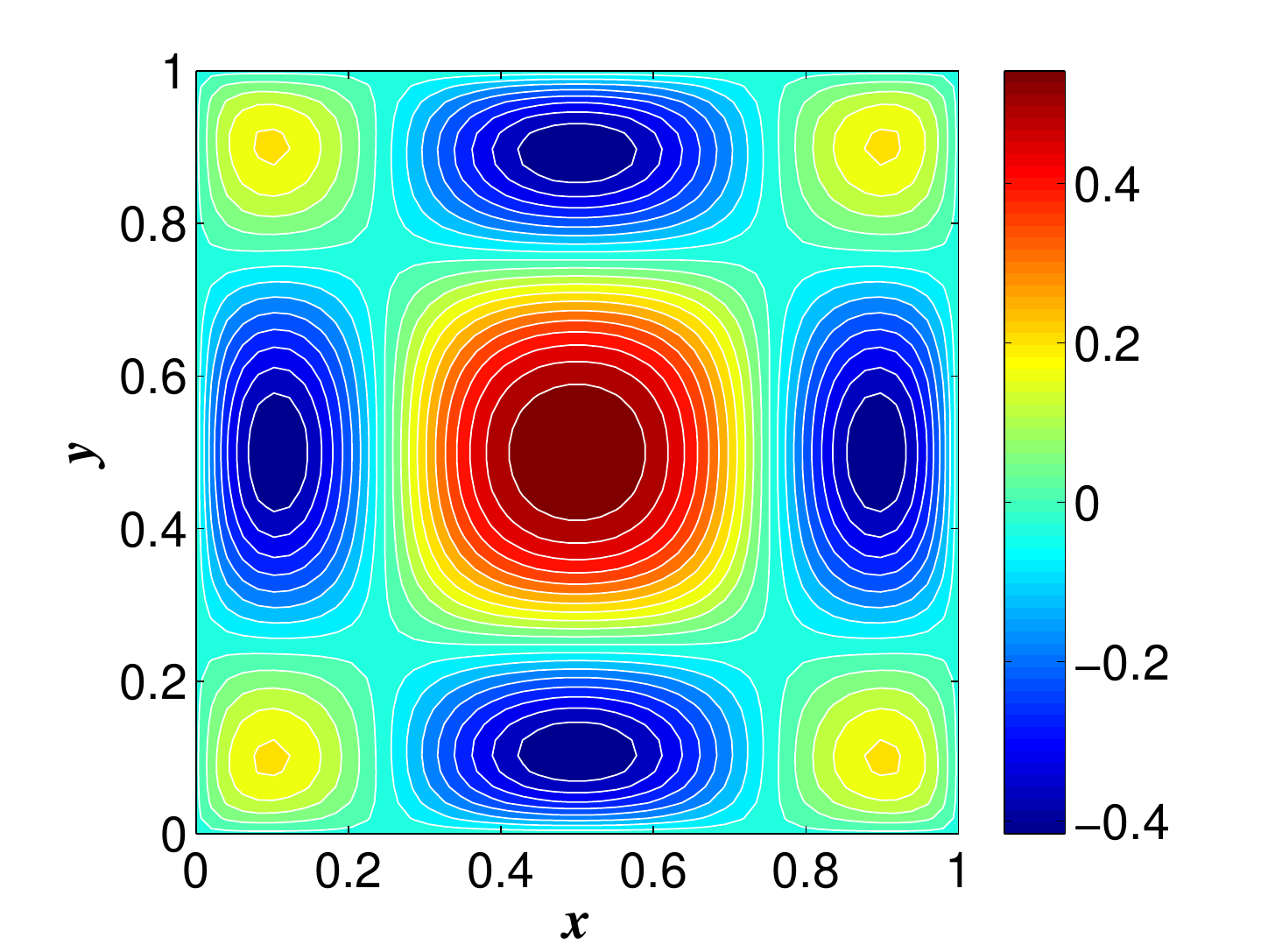}
\end{center} \end{minipage}
\begin{minipage}{0.32\linewidth} \begin{center}
\includegraphics[width=1\linewidth]{./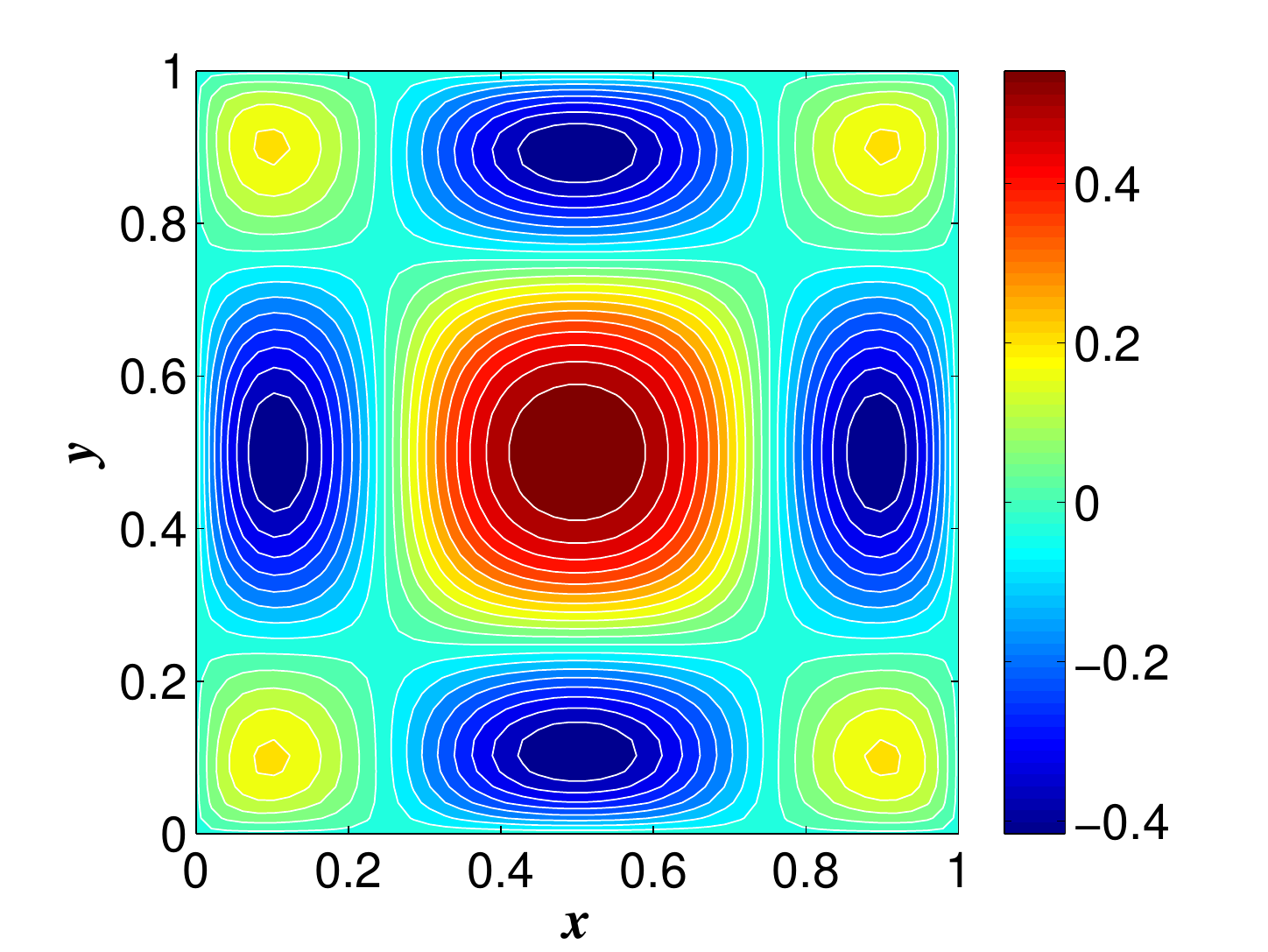}
\end{center} \end{minipage}
\begin{minipage}{0.32\linewidth} \begin{center}
\includegraphics[width=1\linewidth]{./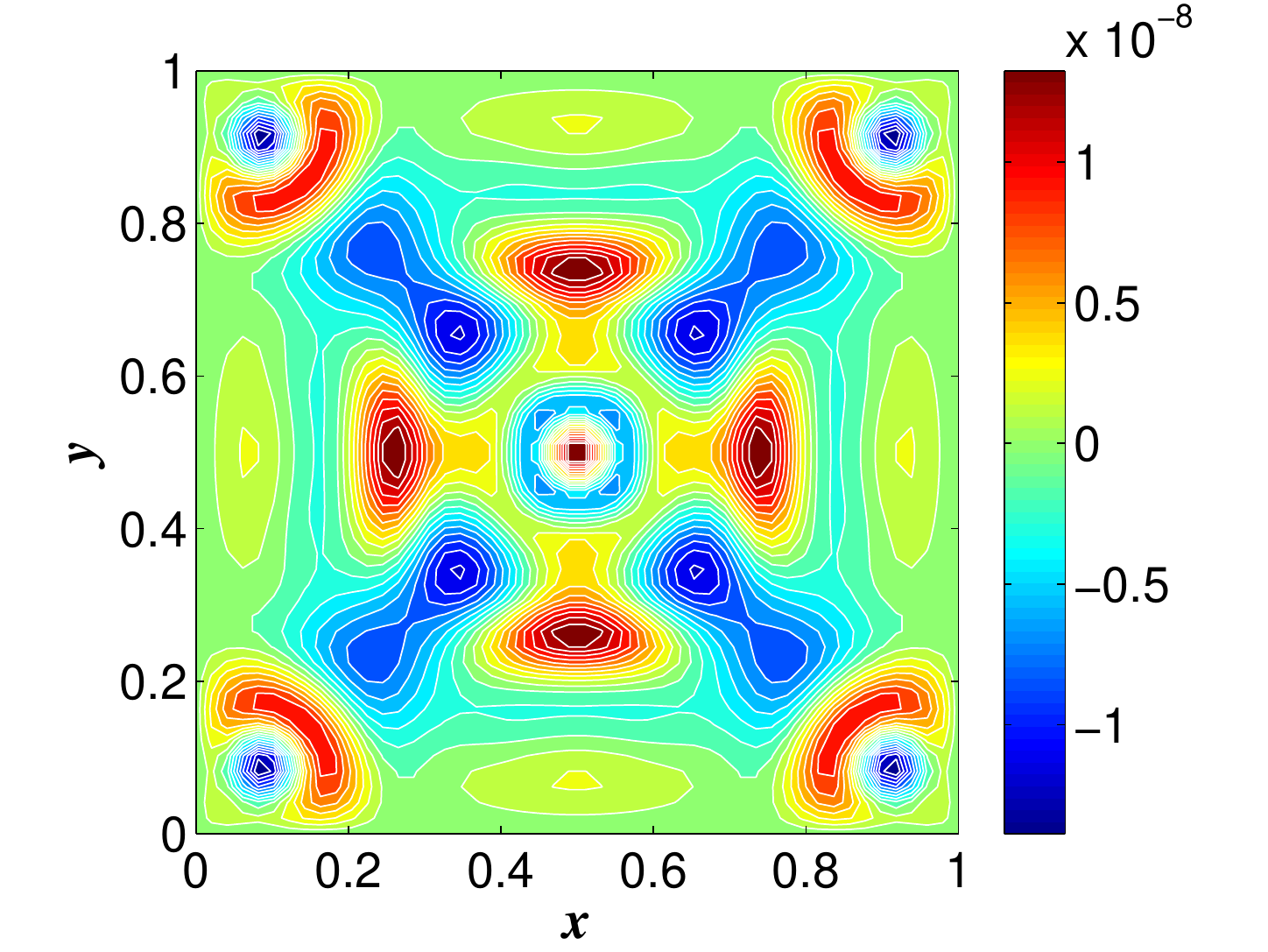}
\end{center} \end{minipage}\\
\begin{minipage}{0.32\linewidth}\begin{center} (a) \end{center}\end{minipage}
\begin{minipage}{0.32\linewidth}\begin{center} (b)\end{center}\end{minipage}
\begin{minipage}{0.32\linewidth}\begin{center} (c)\end{center}\end{minipage}
\caption{(Color online.) Simulation results for the elliptic PDE (\ref{staticeq}) with $\mu = (\mu_1, \mu_2) = (4.5,8.5)$. (a) The benchmark solution solved by the full model with 2500 grid points. (b)  The approximating solution solved by the ARM-chord method with $k=10$. (c) Numerical error $e=u-\hat u$ of the ARM-chord reduced system with $k=10$. } \label{fig:staticsnap}\vspace{-3mm}
\end{center}
\end{figure}

Figure \ref{fig:staticsnap}(a) shows the solution corresponding to the input parameters $\mu_1=4.5$, and $\mu_2=8.5$. The ARM-chord reduced system has a very good approximation for the original system with $k=10$, and the solution profile is given in Figures \ref{fig:staticsnap}(b). The numerical error $u-\hat u$ is given in  Figure \ref{fig:staticsnap}(c).

Finally, we study the numerical error of the  ARM chord method with different subspace $k$ and the kernel length $\sigma$.
Table \ref{tab:ksigma} indicates that the ARM error is not very sensible with $\sigma$; with a large range of $\sigma$ ($\sigma=1  \sim 8$), the relative error for each $k$ is no greater than 2 times of the minimal error with the optimal  $\sigma$. Moreover, as $k$ increases, the optimal  $\sigma$ tends to shift to a larger value.

\begin{table} [htbp]
\begin{center}
\caption{The numerical error of the ARM chord method with different subspace $k$ and the kernel length $\sigma$.}
 \label{tab:ksigma}
\begin{tabular}{r|cccccccc}
& \multicolumn{8}{c}{$\sigma$} \\
\cline{2-9}
 $k$ &  0.25&	0.5&	1&	2&	4&	6&	8&	10  \\
\hline
2&  1.27E-03&	1.27E-03&	1.27E-03&	1.36E-03&	1.64E-03&	2.04E-03&	2.41E-03&	2.64E-03\\
4&  2.54E-05&	2.23E-05&	2.08E-05&	3.71E-05&	7.70E-05&	1.28E-04&	1.59E-04&	1.79E-04\\
6&  7.99E-06&	3.20E-06&	3.05E-06&	5.48E-06&	1.01E-05&	1.35E-05&	1.83E-05&	2.20E-05\\
8&  1.76E-06&	1.57E-06&	9.64E-07&	7.03E-07&	1.27E-06&	1.87E-06&	2.63E-06&	3.32E-06\\
10& 5.74E-07&	3.15E-07&	1.99E-07&	1.68E-07&	2.43E-07&	4.24E-07&	6.49E-07&	8.96E-07\\
12& 3.24E-07&	6.77E-08&	5.33E-08&	5.70E-08&	9.27E-08&	1.35E-07&	1.90E-07&	2.49E-07\\
14& 3.27E-07&	4.80E-08&	2.72E-08&	2.09E-08&	4.16E-08&	7.00E-08&	9.46E-08&	1.15E-07\\
16& 4.05E-07&	3.11E-08&	1.15E-08&	8.40E-09&	8.76E-09&	1.31E-08&	1.98E-08&	2.71E-08\\
18& 3.15E-07&	9.51E-08&	6.12E-09&	4.29E-09&	4.24E-09&	4.76E-09&	6.21E-09&	8.50E-08\\
20& 3.85E-07&	2.35E-08&	2.46E-09&	1.71E-09&	1.46E-09&	2.22E-09&	2.91E-09&	3.63E-09\\
\end{tabular}
\end{center}
\end{table}

\section{Nonlinear Parabolic PDEs}\label{sec:parabolic}
In this section, we extend our adaptive model reduction approach to treat nonlinear parabolic PDEs. We first introduce the methodology and then illustrate that the proposed method can yield a high accuracy for the nonlinear Navier-Stokes equation in a low dimensional space.

\subsection{Methodology}
The general form of parabolic PDEs, after discretization, is given by an ODE (\ref{ode}). We still follow the offline-online splitting computational strategy: In the offline computation, for each input parameter $\mu_i$ the solution trajectory gives a snapshot matrix $X_i=[u(t_1,\mu_i),\ldots, u(t_T,\mu_i)]$. Using the standard SVD, we have $X_i=V_i \Lambda_i W_i^T $, where $V_i = [{v_1}({\mu_i}),\ldots,{v_{r_i}}({\mu _i})]$ contains a set of orthonormal basis vectors, $\Lambda_i = {\rm{diag}}\{ {\lambda _1}({\mu _i}),\ldots,{\lambda_{r_i}}({\mu _i})\}$ contains the corresponding eigenvalues. The POD basis matrix $\Phi_i$ is given by the first $k_i$ columns of $V_i$. Correspondingly $\Lambda'_i$ is defined as the first $k_i\times k_i$ block of $\Lambda_i$. Then $\Phi_i$ minimizes the truncation error of $ X_i$ and its projection onto the column space of $\Phi_i$, which is given by $E_i={\left\|{(I - {\Phi_i }{\Phi_i}^T) X_i} \right\|_F}$.

We shall use adaptive reduced bases to form a reduced model. As an analogy of (\ref{info}), a weighted snapshot matrix for  subdomain $i$ can be defined by
\begin{equation}\label{informatrix}
X^A_i=[a_{i1} X_1, \ldots, a_{iN} X_N],
\end{equation}
where $\{a_{ij}\}_{j=1}^N$ are the weighting coefficients for the subdomain $i$.  Similar to the method for elliptic PDEs, we implicitly partition the interested parameter region into some subdomains and pre-compute the adaptive POD basis for each subdomain in the offline stage. Especially, if all $a_{ij}$s equal 1, the ARM degenerates to the GRM and if $a_{ij}$ is formed by a compact scheme, such as a Gaussian function, it leads to less truncation error during the SVD process. For convenience, we shall remove the subscript $i$ for convenience and use $X^A$ in place of $X^A_i$.

The direct SVD of $X^A$ could be employed to obtain a reduced basis, but it is not the most efficient approach. When the trajectories exhibit fast variation over a long time domain, a great deal of memory must be allocated to record $X^A$. Suppose we pick $T$ snapshots from each trajectory, then the total size of $X^A$ is $n\times NT$. If $NT$ is very large, the SVD of $X^A$ could be expensive. In order to save more memory and improve the efficiency, the online POD basis could be constructed from the first fewer POD modes of some precomputed trajectories, rather than the original snapshots \cite{MadayY:12a,GreplMA:05a}. In order to adaptively pick these trajectories, we define an \emph{information matrix},
\begin{equation}\label{infoburgers}
 X' := [{a_1}\Phi_1 \Lambda _1',\ldots,a_N\Phi_N\Lambda_N'].
 \end{equation}
If each $\Phi_k$ is a $n\times k$ matrix, then the size of $X'$ is $n\times Nk$. Notice that $k\ll T$ for parabolic PDEs with large time domain. We claim the adaptive reduced bases can be computed by the SVD of $X'$ with a small truncation error.

\medskip
\begin{lemma} \label{lem:truncel}
Let $\Phi\in \mathcal{V}_{n,k'}$ minimize the truncation error of $X'$ and its projection onto a $k'$-dimensional subspace of $\mathbb{R}^n$. The error is given by $E_0={\left\| {(I - {\Phi}{\Phi }^T) X'} \right\|_F}$ in Frobenius norm. Then the projection error for the original weighted snapshot matrix $X^A$ in (\ref{informatrix}) is bounded by
\begin{equation}\label{conc}
{\left\| {(I - {\Phi }{\Phi }^T) X^A} \right\|_F}
 \le {E_0} + \sqrt {\sum\limits_{i = 1}^N {a_i^2E_i^2} }.
\end{equation}
\end{lemma}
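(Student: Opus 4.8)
The plan is to bound the projection error block by block and then recombine using the $\ell^2$-additivity of the Frobenius norm across the side-by-side blocks of $X^A$. Write $P := I - \Phi\Phi^T$ for the orthogonal projection onto the orthogonal complement of the column space of $\Phi$; since $P$ is an orthogonal projection, $\|PM\|_F \le \|M\|_F$ for every $M$. Because $X^A = [a_1 X_1, \ldots, a_N X_N]$ stacks its blocks horizontally, the squared Frobenius norm splits as $\|P X^A\|_F^2 = \sum_{i=1}^N a_i^2 \|P X_i\|_F^2$, so it suffices to control each $\|P X_i\|_F$ and then take the $\ell^2$-combination of the weighted block norms.

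For a single trajectory I would separate $X_i$ into its captured part and its SVD tail. From the truncated SVD, $X_i = \Phi_i \Lambda_i' (W_i')^T + R_i$, where $W_i'$ collects the first $k_i$ right singular vectors and $R_i := (I-\Phi_i\Phi_i^T)X_i$ satisfies $\|R_i\|_F = E_i$. Applying $P$ and the triangle inequality gives $\|P X_i\|_F \le \|P\Phi_i\Lambda_i'(W_i')^T\|_F + \|P R_i\|_F$, and the second term is at most $E_i$ since $P$ is a contraction. For the first term the key observation is that $W_i'$ has orthonormal columns, i.e.\ $(W_i')^T W_i' = I_{k_i}$, so right-multiplication by $(W_i')^T$ preserves the Frobenius norm; hence $\|P\Phi_i\Lambda_i'(W_i')^T\|_F = \|P\Phi_i\Lambda_i'\|_F$. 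Multiplying through by $a_i \ge 0$ yields the clean per-block estimate $a_i\|PX_i\|_F \le \|P(a_i\Phi_i\Lambda_i')\|_F + a_i E_i$.

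Finally I would reassemble the blocks. Regarding $(a_i\|PX_i\|_F)_i$, $(\|P(a_i\Phi_i\Lambda_i')\|_F)_i$, and $(a_i E_i)_i$ as nonnegative vectors in $\mathbb{R}^N$, the per-block estimate is an entrywise inequality, so Minkowski's inequality for the Euclidean norm gives $\|PX^A\|_F = \big(\sum_i a_i^2\|PX_i\|_F^2\big)^{1/2} \le \big(\sum_i \|P(a_i\Phi_i\Lambda_i')\|_F^2\big)^{1/2} + \big(\sum_i a_i^2 E_i^2\big)^{1/2}$. Recognizing that $X'=[a_1\Phi_1\Lambda_1',\ldots,a_N\Phi_N\Lambda_N']$ splits additively in the same way gives $\big(\sum_i \|P(a_i\Phi_i\Lambda_i')\|_F^2\big)^{1/2} = \|PX'\|_F = E_0$, which is exactly the claimed bound (\ref{conc}).

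The main obstacle is conceptual rather than computational: arranging the two independent error sources so that they add rather than compound. The decomposition $X_i = \Phi_i\Lambda_i'(W_i')^T + R_i$ isolates the per-trajectory truncation error $E_i$, while the surviving term is precisely a norm-preserving reshaping of the block $a_i\Phi_i\Lambda_i'$ of the information matrix $X'$, whose projection error is controlled globally by $E_0$. The only load-bearing identities are $\|M(W_i')^T\|_F = \|M\|_F$ when $W_i'$ has orthonormal columns and the $\ell^2$-block additivity of $\|\cdot\|_F$; everything else is the triangle and Minkowski inequalities.
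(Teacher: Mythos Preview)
Your proof is correct and follows essentially the same route as the paper: both split $X^A$ into the part captured by the truncated SVDs (which, after peeling off the orthonormal $W_i'$ factors, is exactly $X'$) and the residual tails with norms $a_iE_i$, then combine via the triangle inequality. The only cosmetic difference is that the paper packages this as a single global triangle inequality on an auxiliary matrix $\tilde X^A=[a_1\Phi_1\Lambda_1'(W_1')^T,\ldots]$, whereas you work block by block and reassemble with Minkowski; the two are equivalent.
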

\begin{proof}
Since the non-truncated SVD gives $X_i=V_i \Lambda_i W_i^T$, by the definition of $X^A$ in (\ref{informatrix}), we have $X^A=[a_1 V_1 \Lambda_i W_1^T, \ldots,a_N V_N \Lambda_N W_N^T]$. We construct a new matrix
\begin{equation*}
\tilde X^A:=[a_1 V_1 \tilde \Lambda _1 W_1^T, \ldots, a_N V_N \tilde
\Lambda _N W_N^T],
\end{equation*}
where $\tilde \Lambda_i$ is has the same size as $\Lambda_i$, but only contains the first $k_i$ nonzero singular values of $\Lambda_i$, i.e, $\tilde \Lambda_i ={\rm{diag}}\{ {\lambda _1}({\mu_i}),\ldots,{\lambda _{k_i}}({\mu _i}), 0,\ldots,0\}$. Since $W_i$ are orthonormal,
\begin{equation}
(X^A-\tilde X^A) (X^A-\tilde X^A)^T=\sum_{i=1}^N a_i^2 V_i
(\Lambda_i-\tilde \Lambda_i)^2 V_i^T.
\end{equation}
It follows that
\begin{equation} \label{con1}
\begin{array}{l}
 \left\| {X^A - \tilde X^A} \right\|_F^2 = \mathrm{tr}\left( {(X^A - \tilde X^A){{(X^A - \tilde X^A)}^T}} \right)
 = \sum\limits_{i = 1}^N {\mathrm{tr}(a_i^2{V_i}{{({\Lambda _i} - {{\tilde \Lambda }_i})}^2}V_i^T)} \\
 \ \ \ \ \ \ \ \ \ \ \ \ \ \ \ \ = \sum\limits_{i = 1}^N {a_i^2\mathrm{tr}({{({\Lambda _i} - {{\tilde \Lambda }_i})}^2})} = \sum\limits_{i = 1}^N {a_i^2E_i^2} \\
 \end{array}
 \end{equation}
The last equity holds because $E_i={\left\| {(I - {\Phi_i }{\Phi_i}^T) X_i} \right\|_F}=\sqrt{\sum_{j=k_i+1}^{r_i}\lambda_j^2}$. On the other hand,
\begin{equation} \label{con2}
\begin{array}{l}
 {\left\| {(I - \Phi {\Phi ^T})\tilde X^A} \right\|_F} = {\left\| {(I - \Phi {\Phi ^T})[{a_1}{V_1}{{\tilde \Lambda }_1},\ldots,{a_N}{V_N}{{\tilde \Lambda }_N}]{\rm{diag}}\{ W_1^T,\ldots,W_N^T\} } \right\|_F} \\
 = {\left\| {(I - \Phi {\Phi ^T})[{a_1}{V_1}{{\tilde \Lambda }_1},\ldots,{a_N}{V_N}{{\tilde \Lambda }_N}]} \right\|_F} = {\left\| {(I - \Phi {\Phi ^T})[{a_1}{\Phi _1}{\Lambda _1'},\ldots,{a_N}{\Phi _N}{\Lambda _N'}]} \right\|_F} \\
 = {\left\| {(I - \Phi {\Phi ^T}){{ X'}}} \right\|_F} = {E_0} \\
 \end{array}.
 \end{equation}
Using (\ref{con1}) and (\ref{con2}), and combining the following inequity,
\begin{equation}
\begin{array}{l}
 {\left\| {(I - \Phi {\Phi ^T})X^A} \right\|_F} \le {\left\| {(I - \Phi {\Phi ^T})\tilde X^A} \right\|_F} + {\left\| {(I - \Phi {\Phi ^T})(X^A - \tilde X^A)} \right\|_F} \\
 \ \ \ \ \ \ \ \ \ \ \ \ \ \ \ \ \ \ \ \ \ \ \ \le {\left\| {(I - \Phi {\Phi ^T})\tilde X^A} \right\|_F} + {\left\| {X^A - \tilde X^A} \right\|_F} \\
 \end{array}
 \end{equation}
 ($\ref{conc}$) is obtained.
 \hfill
\end{proof}

\medskip

We notice that each $\Phi_i$ in (\ref{infoburgers}) does not necessary to have the same size. Based on the singular values in $\Lambda_i$, we can adaptively set $k_i$ such that each $E_i$ is smaller than a constat number, say $\epsilon_0$, during the offline stage. Moreover, the dimension $k'$ of $\Phi$ could also be adaptively chosen such that the total projection error bound, given by the RHS of (\ref{conc}), is smaller than another given number.

Similarly the collateral information matrix for the nonlinear snapshots shares the similar expression, i.e.,
\begin{equation}\label{infoburgers1}
Y' = [{a_1}\Psi_1\Sigma_1,\ldots,a_N\Psi_N\Sigma_N],
 \end{equation}
where $\Psi_i$ and $\Sigma_i$ are computed by the truncated SVD for the nonlinear snapshots on the trajectory $u(\mu_i)$. Having two sets of basis vectors, a reduced equation, formed by POD-Galerkin approach (\ref{rompara}) or POD-DEIM approximation (\ref{deimapprox}), could be used to solve an approximating solution $\hat u(t,\mu)$.

\subsection{Cavity Flow Problem}\label{sec:CFllp}
In this section, the performance of the adaptive model reduction is illustrated in numerical simulation of  the Navier-Stokes equation in a lid-driven cavity flow problem. We focus on demonstrating the capability  of the adaptive model reduction algorithms to deliver  accurate solutions with significant speedups.

Mathematically, the cavity-flow problem can be represented in terms of the stream function $\psi$ and vorticity $\omega$ formulation of the incompressible Navier-Stokes equation. In non-dimensional form, the governing equations are given as
\begin{equation} \label{sf}
\psi_{xx} + \psi_{yy} =  - \omega ,
\end{equation}\vspace{-5mm}
\begin{equation} \label{vt}
\omega_t =  - \psi_y  \omega_x + \psi_x\omega_y +
\frac{1}{\rm{Re}}\left( \omega_{xx} + \omega_{yy} \right),
\end{equation}
where Re is the Reynolds number and $x$ and $y$ are the Cartesian coordinates.  The space domain $\Omega=[0,L_x] \times [0,L_y] $ is fixed in time for each test. The velocity field is given by $u=\partial \psi /\partial y$, $v=-\partial \psi /\partial x$. No-slip boundary conditions are applied on all nonporous walls including the top wall moving at speed $U=1$. Using Thom's formula~\cite{ThomA:33a}, these conditions are, then, written in terms of stream function and vorticity. For example on the top wall one might have
\begin{equation}
\psi_B=0,
\end{equation}
\begin{equation}
\omega_B=\frac{-2\psi_{B-1}}{h^2}-\frac{U}{h},
\end{equation}
where  subscript $B$ denotes points on the moving wall, subscript $B-1$ denotes points adjacent to the moving wall, and $h$ denotes grid spacing. Expressions for $\psi$ and $\omega$ at remaining walls with $U=0$ can be obtained in an analogous manner.
 The initial condition is set as $u(x,y)=v(x,y)=0$. The discretization is performed on a uniform mesh with second-order central finite difference approximations for second-order derivatives in (\ref{sf}) and (\ref{vt}). The convective term in (\ref{vt}) is discretized via a first-order upwind difference scheme. For the time integration of (\ref{vt}), the implicit Crank-Nicolson scheme is applied for the diffusion term, and the explicit two-step Adams-Bashforth method is employed for the advection term. Since the governing equation contains only linear and quadratic terms,  we apply the Galerkin projection to construct reduced models.

In the numerical simulation, the full model uses $129 \times 129$ grid points and $\delta t= 2\times10^{-3}$ as a unit time step. The offline computation varies the Reynolds number from $600$ to $1600$ with equally spaced intervals 200. The horizontal length is given by a fixed value $L_x=1$ while the vertical length varies from 0.8 to 1.2 with equally spaced intervals 0.1. For convenience, each of the  precomputed input parameter (Re, $L_y$) is used as a reference point for the parameter domain $  [500, 1700]  \times[0.75, 1.25]$.
In the weighting function (\ref{weight}), the distance ${\left\| {{\mu _i} - {\mu _*}}\right\|}^2$ in the parameter domain is defined as $({{Re_i-Re_*}\over{2000}})^2+(L_{y_i}-L_{y_*})^2$ such that the variations of the Reynolds number and the aspect ratio are measured on a similar scale.
For the online testing, we randomly select 100 parameters in the same parameter domain. Meanwhile, we also partition the whole spatial domain $[0, 50]$ into 10 segments, and each adaptive reduced basis is constructed from all the data from one segment and partial data from its neighbours. For example, the first segment takes the solution snapshot from the time domain [0,6], and the second segment takes solution snapshot from the time domain [4, 11].

Considering that the  states of the Navier-Stokes equation show high time dependencies, compared with an elliptic equation, more modes are needed in order to present the entire solution trajectory with high accuracy. For each trajectory segment, the first 80 modes with singular values of solution snapshots are restored. Figures \ref{fig:svdp} compares the normalized singular values of the information matrices for the GRM and the average values for the AGM. As expected, the singular values in the ARM decrease more quickly than those from the GRM, which means in order to obtain the same level of accuracy, the ARM needs less modes to represent the original system.

\begin{figure}
\begin{center}
\begin{minipage}{0.48\linewidth} \begin{center}
\includegraphics[width=1\linewidth]{./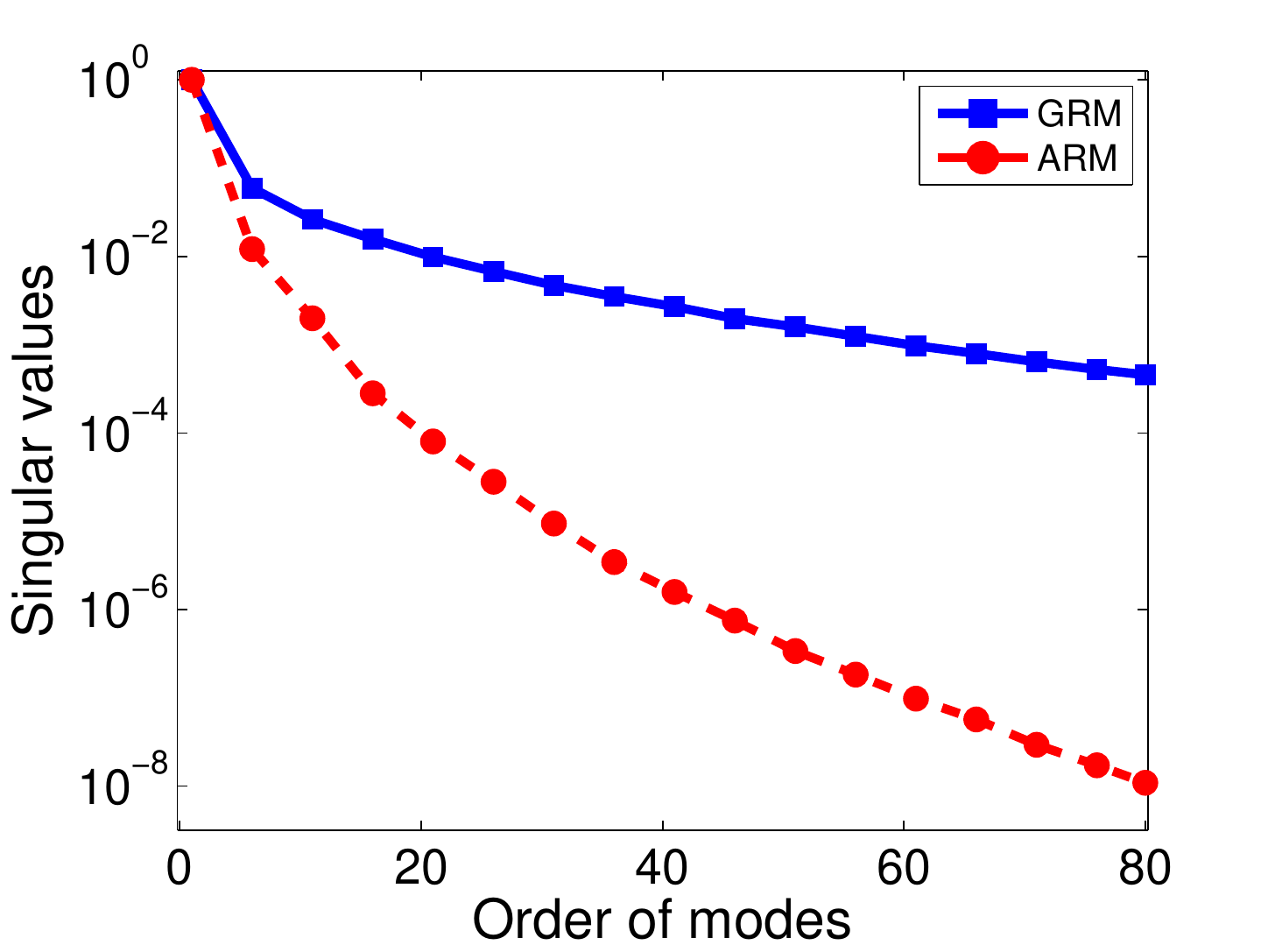}
\end{center} \end{minipage}\\
\caption{(Color online.)  Normalized singular values of the information matrices of the solution snapshots for the 2D Navier-Stokes equation in a lid-driven cavity flow problem. The singular values of the adaptive reduced basis have faster decreasing rate compared with the global reduced basis.} \label{fig:svdp}
\vspace{-3mm}
\end{center}
\end{figure}

\begin{figure}
\begin{center}
\begin{minipage}{0.32\linewidth} \begin{center}
\includegraphics[width=1\linewidth]{./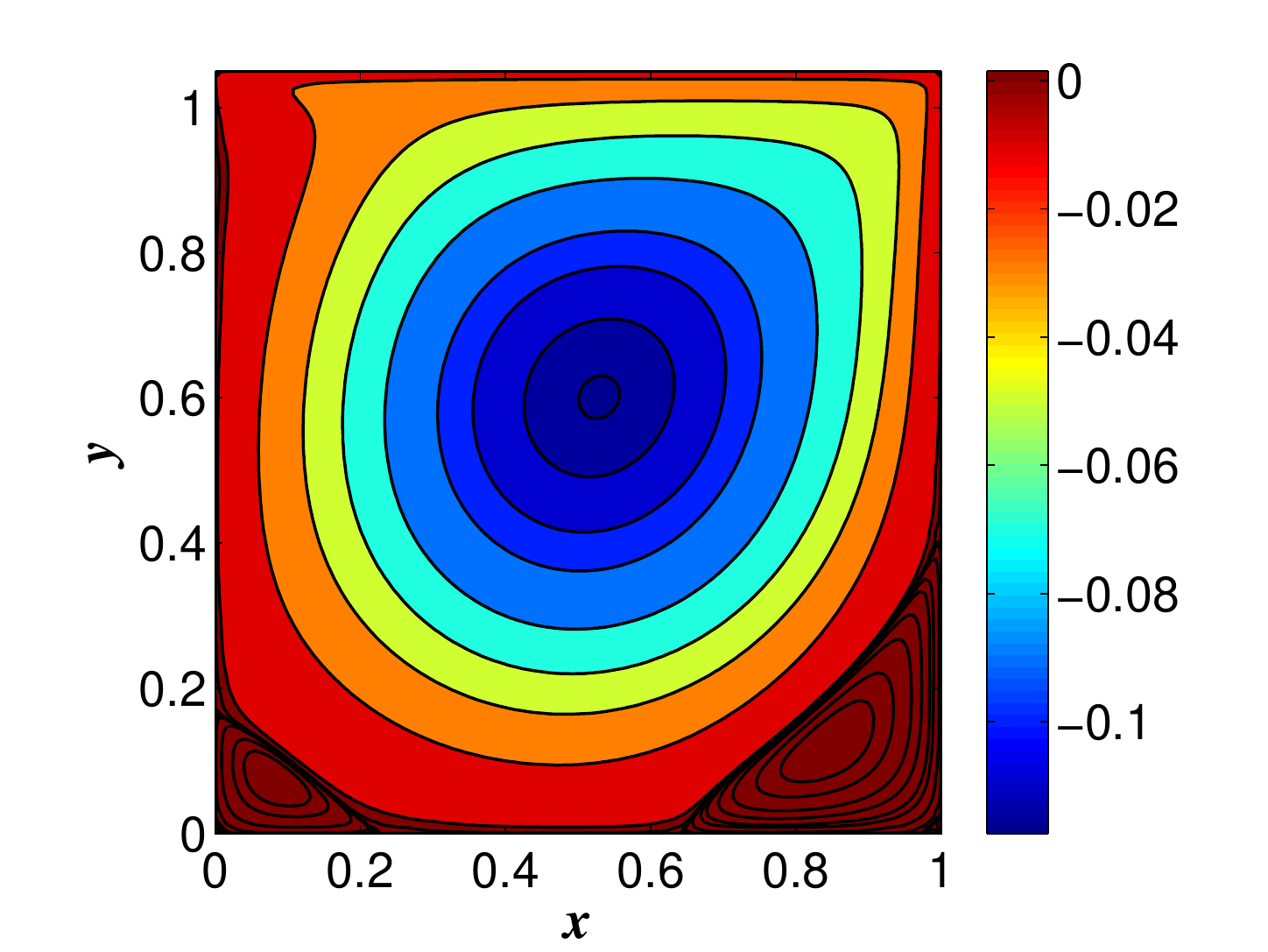}
\end{center} \end{minipage}
\begin{minipage}{0.32\linewidth} \begin{center}
\includegraphics[width=1\linewidth]{./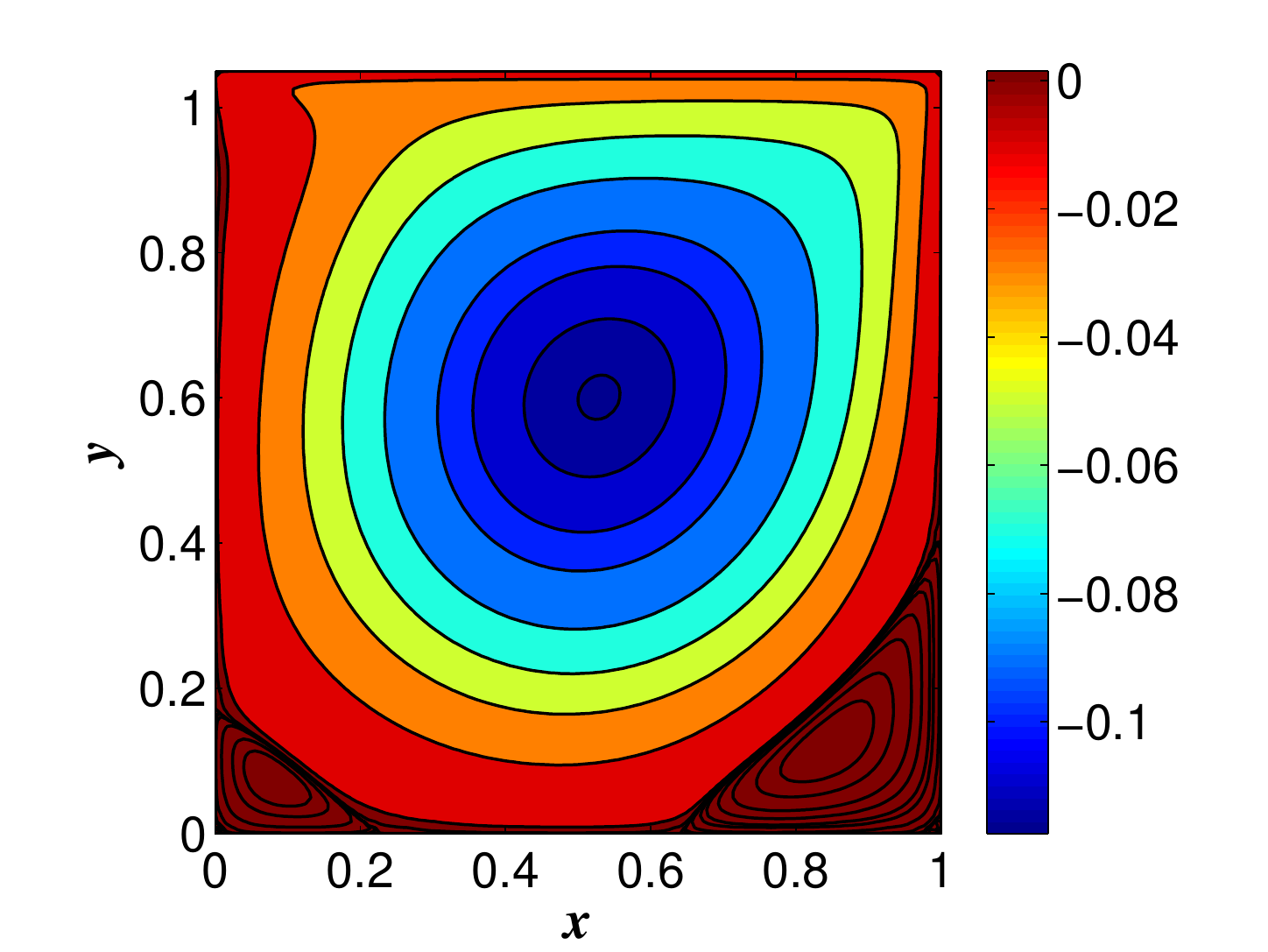}
\end{center} \end{minipage}
\begin{minipage}{0.32\linewidth} \begin{center}
\includegraphics[width=1\linewidth]{./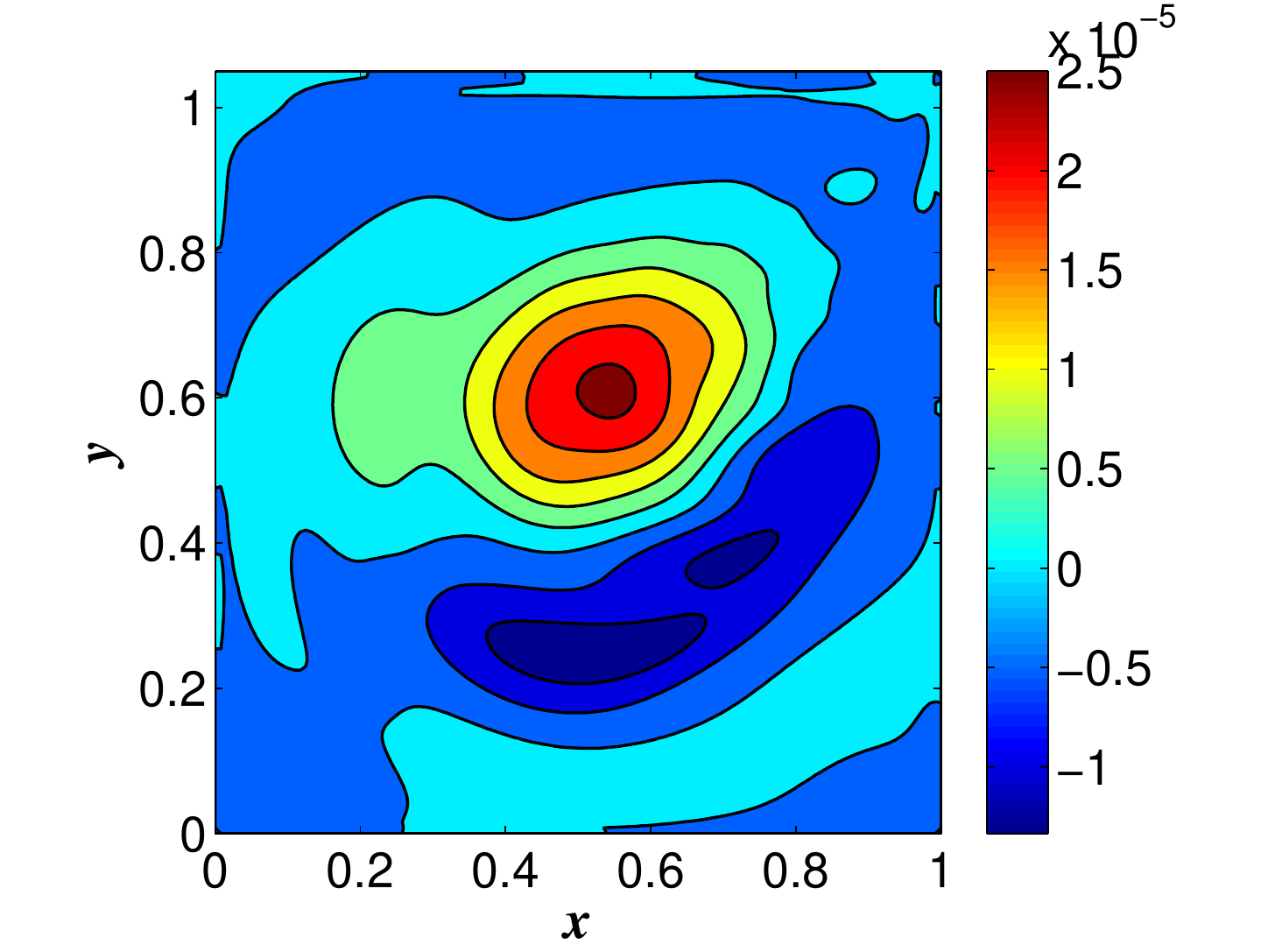}
\end{center} \end{minipage}
\begin{minipage}{0.32\linewidth}\begin{center} (a) \end{center}\end{minipage}
\begin{minipage}{0.32\linewidth}\begin{center} (b) \end{center}\end{minipage}
\begin{minipage}{0.32\linewidth}\begin{center} (c) \end{center}\end{minipage}
\caption{(Color online.) Streamline pattern for driven cavity problem with Re=$1050$ and $L_y=1.05$ at $t=50$. (a) The full model uses $129 \times 129$ grid points. (b) The approximating result obtained through ARM with 20 modes. We plot the contours of $\psi$ whose values are $-1\times 10^{-10}$, $-1\times 10^{-7}$, $-1\times 10^{-5}$, $-1\times 10^{-4}$, $-0.01$, $-0.03$, $-0.05$, $-0.07$, $-0.09$, $-0.1$, $-0.11$, $-0.115$, $-0.1175$, $1\times 10^{-8}$, $1\times 10^{-7}$, $1\times 10^{-6}$, $1\times 10^{-5}$, $5\times 10^{-5}$, $1\times 10^{-4}$, $2.5\times 10^{-4}$, $1\times 10^{-3}$, $1.3\times 10^{-3}$, and $3\times 10^{-3}$. (c) The error between $u-\hat u$ of the ARM.}
\label{fig:stream}
\end{center}
\end{figure}

\begin{figure}
\begin{center}
\begin{minipage}{0.45\linewidth} \begin{center}
\includegraphics[width=1\linewidth]{./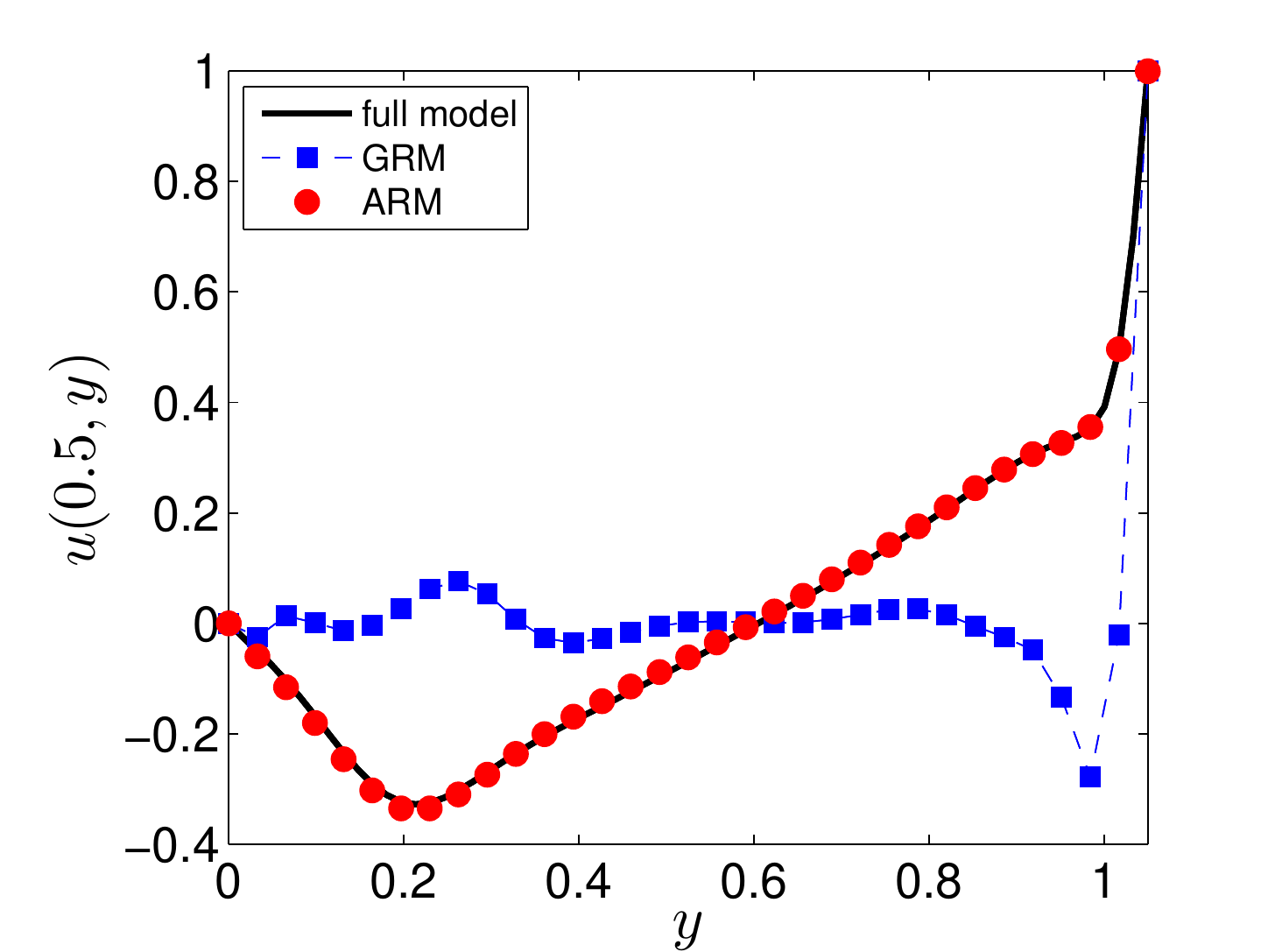}
\end{center} \end{minipage}
\begin{minipage}{0.45\linewidth} \begin{center}
\includegraphics[width=1\linewidth]{./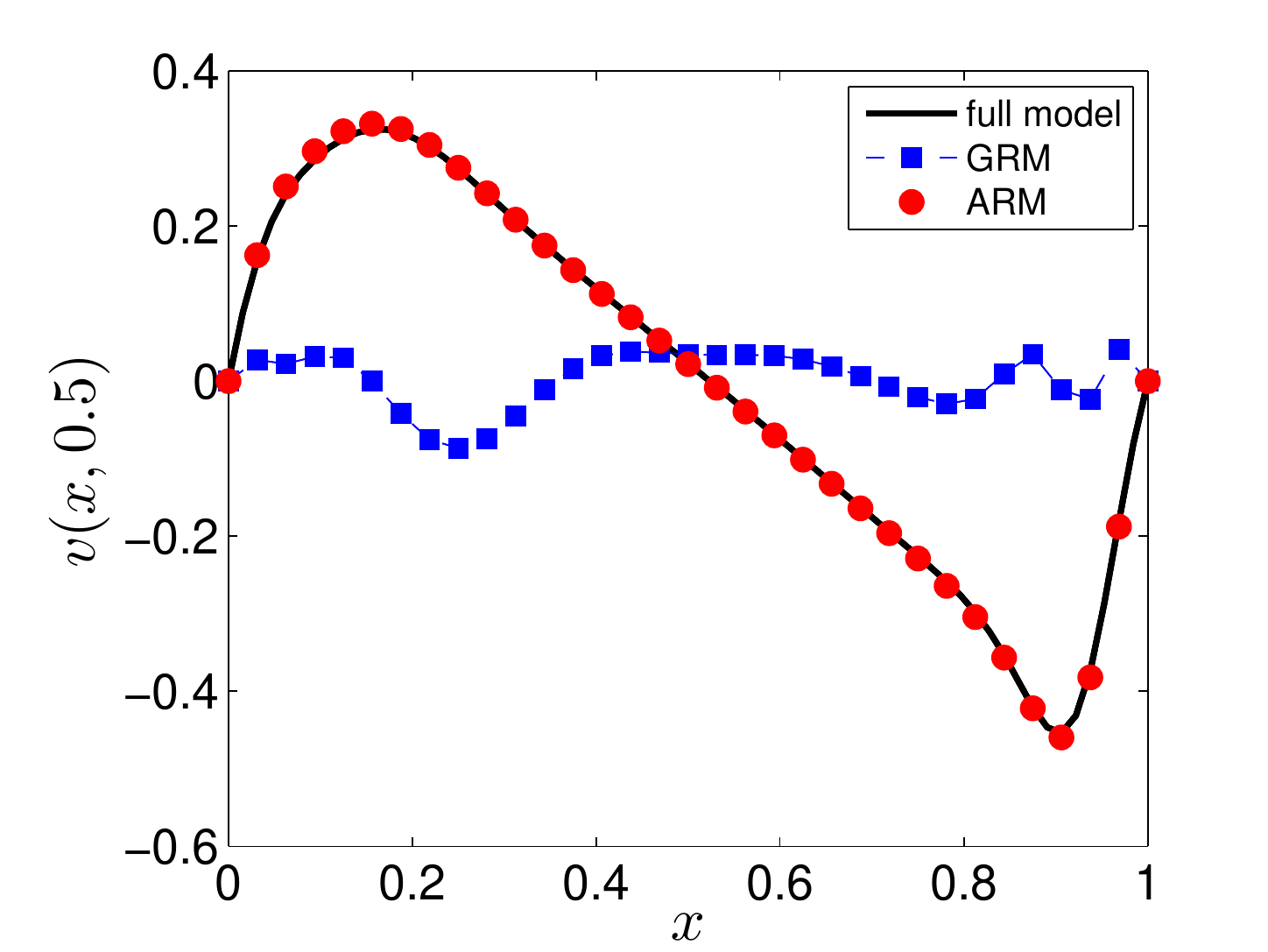}
\end{center} \end{minipage}
\begin{minipage}{0.48\linewidth}\begin{center} (a) \end{center}\end{minipage}
\begin{minipage}{0.48\linewidth}\begin{center} (b) \end{center}\end{minipage}
\caption{(Color online.) (a) Comparison of the velocity component $u(x=0.5, y)$
along the $y$-direction passing though the geometric center of the domain between the
full model, the GRM, and the ARM at $t=50$. The first $k=20$ modes are used for the GRM and the ARM.
(b) Comparison of the velocity component $v(x, y=0.5025)$ along the
$x$-direction passing through the geometric center of the domain between the full model, the GRM, and the ARM at $t=50$. } \label{fig:velocity}
\end{center}
\end{figure}

The streamline contours of Re=1050 and $L_y=1.05$ for the lid-driven cavity flow at $t=50$ are shown in Figure \ref{fig:stream}(a). The ARM provides an approximate solution with 20 modes, as seen in Figure \ref{fig:stream}(b). The numerical error, $u-\hat u$, of the ARM is shown in  \ref{fig:stream}(c).
Figure \ref{fig:velocity} illustrates the velocity profiles for $u$
along the vertical lines and $v$ along the horizontal lines passing
through the geometric center of the cavity.  The GRM
provides a poor approximation with 20 modes. But the ARM can yield much high accurate solutions with the same subspace dimensions.

Figure \ref{fig:burgerspod}(a) shows the  normalized error for 100 randomly selected new parameters. The direct projection error $e_k$ for both GRM and  ARM are also shown as a function of the subspace dimension $k$.  We observe that the GRM always has higher truncation error than the ARM for a fixed dimension.  Figure \ref{fig:burgerspod}(b) shows that the computational time of the GRM and the ARM are almost the same. When the subspace dimension is low, say $k\le 30$, both methods can obtain significant speedups.  In order to obtain a highly accurate representation for
the entire trajectory, the GRM needs a subspace with relatively high dimensionality to form a surrogate model. Thus, the  GRM is not necessarily  able to provide significant speedups for a dynamical system for long time evolution, especially when the subspace dimension is relatively high.  On the other hand, using the domain decomposition approach, the ARM can yield an accurate solution based on any prescribed subspace dimensions. Therefore, the ARM can always  obtain a significant speedup with good accuracy.

\begin{figure}
\begin{center}
\begin{minipage}{0.48\linewidth} \begin{center}
\includegraphics[width=1\linewidth]{./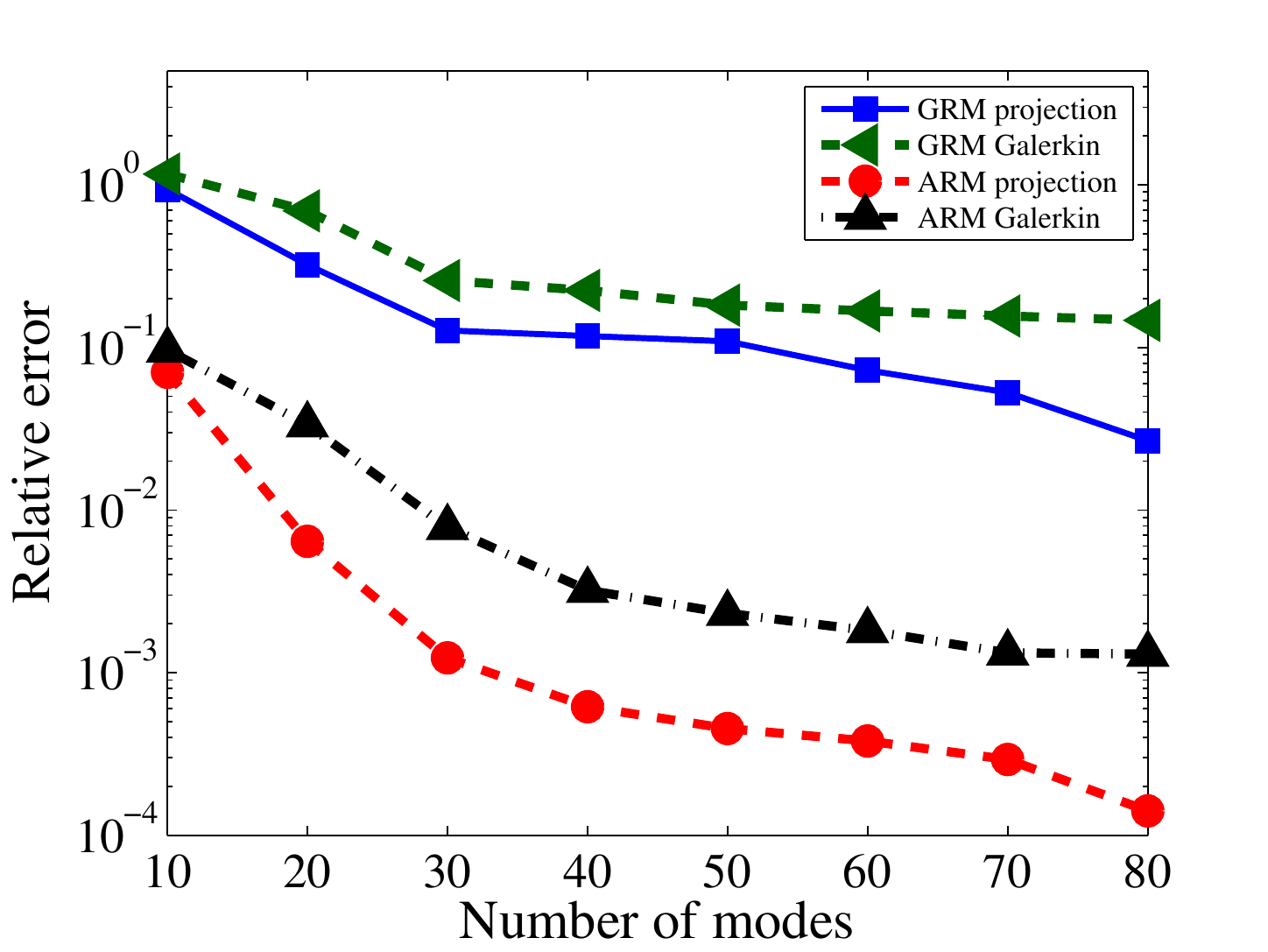}
\end{center} \end{minipage}
\begin{minipage}{0.48\linewidth} \begin{center}
\includegraphics[width=1\linewidth]{./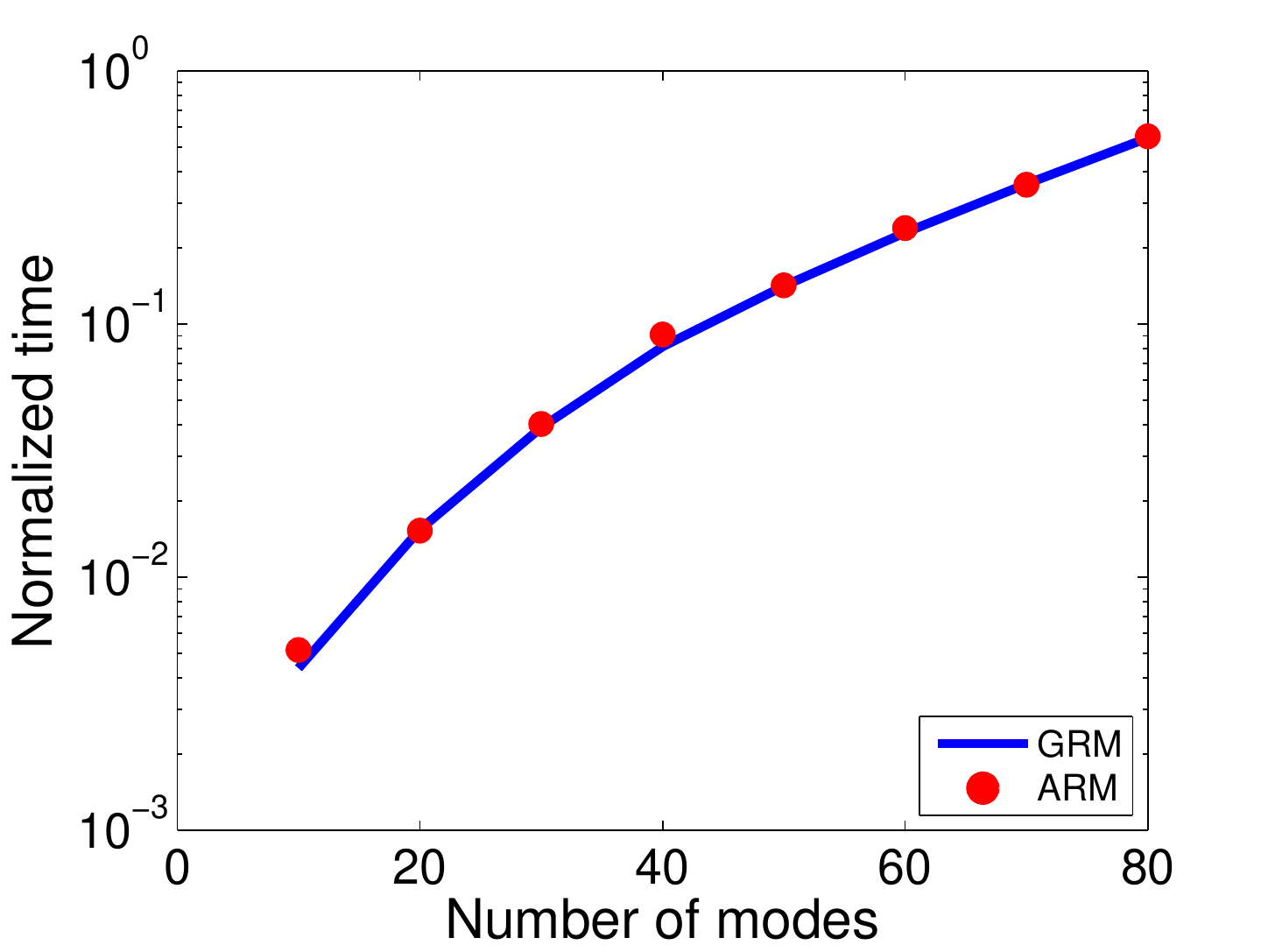}
\end{center} \end{minipage}\\
\begin{minipage}{0.48\linewidth}\begin{center} (a) \end{center}\end{minipage}
\begin{minipage}{0.48\linewidth}\begin{center} (b) \end{center}\end{minipage}
\caption{(Color online.) Simulation results for the 2D Navier-Stokes equation in a lid-driven cavity flow problem. (a)  The error for the ARM and the GRM obtained by the Galerkin projection. Plot of the relative error, $\|u(\mu)-\hat u(\mu)\|/\|u(\mu)\|$, based on 100 randomly selected parameters at $t=50$. For the same dimension, the ARM could always obtain higher accuracy compared with the GRM. (b) The relative running time of the GRM and the ARM. These values are normalized by the average running time of the full model.} \label{fig:burgerspod}
\vspace{-3mm}
\end{center}
\end{figure}

Finally, we study the numerical error of the  ARM Galerkin method. Except $k$ and $\sigma$, all the parameters
remain the same.
Table \ref{tab:psigma} indicates that the ARM error is not very sensible with $\sigma$; the subspace dimension $k$ is the dominant factor that determines the numerical error.

\begin{table} [htbp]
\begin{center}
\caption{The numerical error of the ARM chord method with different subspace $k$ and the kernel length $\sigma$.}
 \label{tab:psigma}
\begin{tabular}{r|ccccccccc}
& \multicolumn{9}{c}{$\sigma$} \\
\cline{2-10}
$k$ &	0.001&	0.002&	0.005&	0.01&	0.02&	0.05&	0.1&	0.2&	0.5\\
\hline
20&	7.92E-4&	7.19E-4&	4.42E-4&	3.72E-4&	4.85E-4&	8.86E-4&    1.41E-3&    2.25E-3&	3.21E-3\\
40&	1.92E-4&	1.76E-4&	1.29E-4&	1.11E-4&	1.29E-4&	1.32E-4&	1.36E-4&	1.39E-4&	1.85E-4\\
60&	8.70E-5&	7.00E-5&	6.60E-5&	6.30E-5&	7.00E-5&	6.80E-5&	7.70E-5&	9.60E-5&	1.11E-4\\
\end{tabular}
\end{center}
\end{table}

\section{ Conclusion}\label{sec:conclusion}
In this paper, we have proposed a new technique, the adaptive reduced model (ARM), for model reduction of large-scale parameterized PDEs.  The method could be applied to both elliptic and parabolic PDEs based on the proper orthogonal decomposition (POD) and the discrete empirical interpolation method (DEIM). Compared with the global reduced model, The ARM could approximate the original system with much lower dimensions. Compared with the local reduced model, the  ARM can more  efficiently extract the information for all the precomputed snapshots; and therefore yield more accurate solutions. The ARM is especially suitable when the data snapshots are very expensive to obtain. For elliptic PDEs, the adaptive reduced basis can be  constructed by the singular value decomposition (SVD) of an information matrix, whose columns are given by weighted solution snapshots. Furthermore,  the reduced chord iteration could be used in the context of the ARM to save additional computational cost. For parabolic PDEs, an information matrix for the reduced basis can be constructed  from a set of weighted  empirical eigenfunctions, which has a smaller size compared with a matrix constructed from weighted snapshots. Both analytical results and numerical simulations demonstrate the capability of the ARM to solve a large-scale system with high accuracy and good efficiency.

\bibliographystyle{siam}
\bibliography{RefA3}

\end{document}